\newcommand*{\defeq}{\mathrel{\vcenter{\baselineskip0.5ex \lineskiplimit0pt
			\hbox{\scriptsize.}\hbox{\scriptsize.}}}%
	=}
\renewcommand{\epsilon}{\varepsilon}
\renewcommand{\theta}{\vartheta}
\renewcommand{\phi}{\varphi}
\newcommand{\R}{\mathbb{R}}
\newcommand{\N}{\mathbb{N}}
\newcommand{\esse}{\mathbb{S}}
\DeclarePairedDelimiter\pe{\langle}{\rangle}
\title{Some structure theorems for Weingarten surfaces}
\author{Angelo Benedetti}
\address{Universit\`a degli Studi dell'Aquila \\
	Dipartimento di Ingegneria e Scienze dell'Informazione e Matematica\\
	via Vetoio 1 \\
	67100 L'Aquila\\
	Italy}
\email{angelo.benedetti@graduate.univaq.it}
\newtheorem{lemma}{Lemma}[section]
\newtheorem{theorem}[lemma]{Theorem}
\newtheorem{theoremletter}{Theorem}
\newtheorem{lemmaletter}[theoremletter]{Lemma}
\newtheorem{corollary}[lemma]{Corollary}
\theoremstyle{definition}
\newtheorem{definition}[lemma]{Definition}
\newtheorem*{definition*}{Definition}
\theoremstyle{remark}
\newtheorem*{remark*}{Remark}
\begin{document}
		\let\thefootnote\relax\footnote{{\bf Keywords and phrases}: Weingarten surfaces, CMC surfaces, ellipticity, Alexandrov reflection
			
			{\bf MSC 2020 subject classification}: 53A10, 53C42}	
	\begin{abstract}
		Let \(M\subset\mathbb{R}^3\) be a properly embedded, connected, complete surface with boundary a convex planar curve \(C\), satisfying an elliptic equation \(H=f(H^2-K)\), where \(H\) and \(K\) are the mean and the Gauss curvature respectively -- which we will refer to as \textit{Weingarten equation}. When \(M\) is contained in one of the two halfspaces determined by \(C\), we give sufficient conditions for \(M\) to inherit the symmetries of \(C\). In particular, when \(M\) is vertically cylindrically bounded, we get that \(M\) is rotational if \(C\) is a circle. In the case in which the Weingarten equation is linear, we give a sufficient condition for such a surface to be contained in a halfspace. Both results are generalizations, to the Weingarten setting, of results of Rosenberg and Sa Earp for constant mean curvature surfaces. In particular, our results also recover and generalize the constant mean curvature case.
	\end{abstract}	
		\maketitle
		\section{Introduction}
	Weingarten surfaces in \(\R^3\) are a generalization of constant mean curvature (cmc) surfaces where, instead of the mean curvature \(H\) to be constant, we require a certain functional relation -- to which we will refer as \textit{Weingarten equation} -- between\,\(H\) and the Gauss curvature \(K\) to hold. In many cases, this may be expressed as: \[H=f(H^2-K),\] where \(f\) is a given (smooth) real-valued function defined on \([0,+\infty)\). This is the case, for example, of linear Weingarten surfaces, i.e. of surfaces satisfying\,\,\(2aH+bK=1\), for\,\(a\), \(b\) constants \cite{saearpweingarten}. A case of interest is when the Weingarten equation is elliptic. The reader may find some relevant references in \cite{britoboundarycircle, chernsphere,  chernwsurfaces, tenenblat, galvezlinear, galvezmira, hartmanwinter}. This list is by no means exhaustive. It is shown in \cite{saearpweingarten} that ellipticity is equivalent to requiring that: \[4tf'(t)^2<1 \text{ for each } t\geq 0.\] Note that this condition is satisfied by the linear Weingarten surfaces with \(a^2+b>0\). In the elliptic case, many results of the theory of cmc surfaces are seen to hold. In fact, ellipticity permits to apply some tools of cmc theory, e.g. Alexandrov reflection technique. For example, the theory of Meeks \cite{meeks} and Korevaar, Kusner and Solomon \cite{Korevaar1989TheSO} is seen to hold for a wide class of Weingarten equations \cite{saearpweingarten} \cite{espinar}.

	In this spirit, this paper originated mainly as a generalization to the setting of Weingarten surfaces of some results of Rosenberg and Sa Earp for cmc surfaces \cite{saearpconvex}. During the work, the application of techniques which differ from the ones in \cite{saearpconvex} led to results which are original also for cmc surfaces. The main object is complete, properly embedded surfaces satisfying an elliptic Weingarten relation, having their boundary on a planar curve \(C\). Without loss of generality, we assume that the curve \(C\) is contained in the horizontal plane \(\{z=0\}\). Moreover, we assume that the surface is transverse to the plane along its boundary \(C\).
	
	When \(C\) is a circle and \(M\) is compact, it is known that \(M\) is rotational -- i.e. it is a spherical cap \cite{britosaearpmeeks} \cite{saearpweingarten}. Observe that this is known only in the case in which \(M\) is assumed to be transverse to the plane $\{z=0\}$ along \(C\). What we do in the following is investigate the case when \(M\) is not compact. One would like to prove that, also in this case, \(M\) is rotational.
	
	In Theorem \ref{zero} we assume that, outside of a compact set, \(M\) is contained in a finite set of solid half cylinders, not necessarily vertical (see Figure \ref{figuretildem}). This is a reasonable assumption, as all finite type elliptic Weingarten surfaces, having \(f(0)\neq0\), satisfy this property \cite{saearpweingarten}. In particular, all rotational elliptic Weingarten surfaces, having \(f(0)\neq0\), are contained in a solid cylinder \cite{fernandezweingarten} \cite{saearpfrench}. When \(M\) is entirely contained in the halfspace \(\{z\geq0\}\), we show that the surface inherits some of the symmetries of \(C\). The result is sharp, in the sense that we exhibit an example of a surface which inherits only the symmetries of its boundary which are detected by the theorem. When \(C\) is a circle and \(M\) is entirely contained in a solid vertical cylinder, we get that \(M\) is the annular end of a rotational Weingarten surface, thereby recovering the analogy with the compact\,\,case. 
	
	The problem is then reduced to understand when the surface \(M\) is contained in one of the two halfspaces determined by \(\{z=0\}\), which is tackled in Theorem \ref{two} for linear Weingarten surfaces. When the surface is compact, this is always the case. In the case in which \(M\) is not compact, we find some sufficient conditions for the case in which \(M\) is vertically cylindrically bounded and it has a finite number of vertical annular ends. To the best of our knowledge, when \(C\) is a circle, no counterexample is known of a non-compact surface \(M\) satisfying the hypotheses of the theorem and not being an annular end of a rotational surface -- both in the Weingarten and cmc setting. Using the previous results, we then give a bound on the number of ends of such a Weingarten surface.

	We remark that the proof of Theorem \ref{two} leads also to a generalization of \cite[Theorem 2]{saearpconvex}, which is the analogous result in the cmc case. In fact, in \cite{saearpconvex} the surface is assumed to have only positive ends -- i.e. they diverge only in the upper halfspace -- while we allow them also to be negative, only requiring the surface to be vertically cylindrically bounded. This seems like a more natural condition, compared to the statement of Theorem \ref{zero}.
	
	\section{Preliminaries}\label{sectionpreliminaries}
	
	Throughout the paper, we use the language of Alexandrov reflection principle, that we introduce now. Let $\Sigma\subset\R^{n+1}$ be a closed, embedded, orientable hypersurface, for \(n\geq1\). First observe that $\Sigma$ separates the ambient space in two connected components. We refer as \textit{interior} to the bounded one and as \textit{exterior} to the unbounded one. Now, consider a hyperplane \(\pi\), disjoint from $\Sigma$, with fixed orthogonal direction $\nu$. Assume that $\nu$ points toward the halfspace containing $\Sigma$.
	
	Define the family of translations:
	\[\pi(t)\defeq \pi+t\nu,\]
	for \(t\in\R\), and the families of halfspaces:
	\[\Pi^-(t)\defeq\bigcup_{s\leq t}\pi_s, \qquad \Pi^+(t)\defeq\bigcup_{s\geq t}\pi_s.\]
	For a value of $t$ such that \(\Pi^-(t)\cap\Sigma\neq\emptyset\), define  $\Sigma^*_t$ as the reflection through $\pi(t)$ of $\Sigma\cap\Pi^-(t)$.
	
	By compactness, we know that there exists a maximal \(T>0\) such that\,\,\(\pi(t)\cap\Sigma=\emptyset\) for each \(t<T\). Then, since $\Sigma$ is closed, the hyperplane \(\pi(T)\) is tangent to $\Sigma$ at some point. Moreover, one can show that there exists a value of $\epsilon>0$ such that, for $\tau=T+\epsilon$, it holds:
	\begin{itemize}
		\item $\Sigma^*_{\tau}$ is contained in the interior of $\Sigma$;
		\item \(\Sigma^*_{\tau}\) is a graph over a region of \(\pi(t+\epsilon)\).
	\end{itemize}
	Again, since $\Sigma$ is bounded, there exists a value $\tilde{\tau}>T+\epsilon$ minimal such that one of the last two conditions doesn't hold for $\tau=\tilde{\tau}$. We then say that the Alexandrov procedure with direction $\nu$ \textit{stops} at the plane $\pi(\tilde{\tau})$.
	
	When $\Sigma$ satisfies, locally, an elliptic equation for which the interior and boundary maximum principle hold -- e.g. $\Sigma$ has constant mean curvature, or it satisfies an elliptic Weingarten equation -- it is known that a plane for which the Alexandrov procedure stops is a plane of symmetry for the surface.

	\begin{definition}
		Let \(\Sigma\subset\R^{n+1}\) be a closed orientable hypersurface. We say that $\Sigma$ has \textit{Alexandrov symmetry} in the direction $\nu\in\esse^{n}$ if the Alexandrov procedure in both direction $\nu$ and \(-\nu\) stops at a hyperplane \(Q_0\) which is the same for both directions.
	\end{definition}
	Observe that Alexandrov symmetry implies symmetry with respect to \(Q_0\). In fact, Alexandrov symmetry is the kind of symmetry which may be inferred from an application of the Alexandrov reflection principle.
	
	In both proofs we use the theory of Korevaar, Kusner and Solomon on the monotonicity of the Alexandrov function, which permits us to use the Alexandrov reflection principle in the presence of (vertical) annular ends. First, we have to introduce the language of Alexandrov functions. We do it with lesser generality than what is done in \cite{Korevaar1989TheSO}, only defining what we need in the following. Let \(M\) be a properly embedded complete elliptic Weingarten surface in \(\R^3\), with boundary on the plane \(\{z=0\}\). Assume\,\,\(M\) is contained in the halfspace \(\R^3_+\), it is vertically cylindrically bounded and it is transverse to the plane \(\{z=0\}\) along \(\partial M\). Let \(Q\) be a vertical plane with normal $\nu$. For a point \(p\in Q\), define the line\,\(L_p\defeq p+\R\nu\). Assume that the line \(L_p\) intersects \(M\). Now, for \(\tau>0\) sufficiently big,\,\,\(p+t\nu\) is disjoint from \(M\) for each \(t\geq\tau\). Let $t_1$ be the first time, as the parameter decreases from $\tau$, such that \(p+t_1\nu\in M\). Now, if \(L_p\) is tangent to \(M\) at \(p+t_1\nu\), set\,$\alpha_1(p)\defeq t_1$. Otherwise, there exists \(t_2\) first time, as the parameter decreases from \(t_1\), such that \(p+t_2\nu\in M\). In this case, set $\alpha_1(p)\defeq(t_1+t_2)/2$. The function\,$\alpha_1$ is called Alexandrov function. It was described in \cite{Korevaar1989TheSO}, where they used it to show their characterization of Delaunay surfaces.
	
	Additionaly, for \(t\geq 0\), one defines:
	\begin{equation}
		\alpha(t)\defeq\sup_{\pe{p,e_3}=t}\alpha_1(p),
	\end{equation}
	where the \(\sup\) is taken over all the \(p\) in \(Q\) for which the Alexandrov function $\alpha_1$ is defined (observe that $\alpha$ is defined for each \(t\geq 0\)).
	\begin{lemmaletter}[\cite{Korevaar1989TheSO}]\label{lemmaalphafunction1}
		Let \(M\) be a connected, properly embedded complete elliptic Weingarten surface in \(\R^3\), with or without boundary, and assume that there exists a solid vertical half cylinder \[\mathcal{C}\defeq\{x^2+y^2\leq K, z\geq0\},\] with \(K>0\), such that \(\emptyset\neq\partial(M\cap \mathcal{C})\subset\{z=0\}\) and \(M\cap C\) is transverse to the plane \(\{z=0\}\). Then, given a vertical plane \(Q\) with direction $\nu$, either \(M\) has a plane of symmetry in the direction $\nu$ or the function $\alpha$, defined with respect to \(M\cap C\), is strictly decreasing in \(t\).
	\end{lemmaletter}
	
	Lemma \ref{lemmaalphafunction1} basically says the following. Assume that \(M\) doesn't have a plane of symmetry parallel to \(Q\), and consider the horizontal sections \((M\cap\mathcal{C})\cap\{z=t\}\), with\,\,\(t\geq 0\). We can start the Alexandrov procedure for \(M\cap\mathcal{C}\) only focusing on what happens at a section\,\,\((M\cap\mathcal{C})\cap\{z=t\}\). Then, the Alexandrov procedure for the section \((M\cap\mathcal{C})\cap\{z=t\}\) stops \textit{strictly before} it stops for \((M\cap\mathcal{C})\cap\{z=\tilde{t}\}\), for each $\tilde{t}>t$.
	\section{A result on symmetry}
	 We use the following notation. $\mathcal{H}$ denotes the plane\,\(\{z=0\}\), and \(\R^3_+\) the upper halfspace determined by \(\mathcal{H}\). We usually deal with a surface \(M\) with boundary a simple closed curve in $\mathcal{H}$. We use \(C\) to denote that curve. Now, \(C\) separates the plane in two open components. We refer to the bounded one as the \textit{interior} and to the unbounded one as the \textit{exterior}. The interior of \(C\) will be denoted by \(D\).
	\begin{theorem}\label{zero}
	Let \(M\) be a properly embedded complete elliptic Weingarten surface in \(\R^3_+\). Assume\,\(M\) is transverse to the plane \(\mathcal{H}\) along \(C\) and that, outside from a compact set, \(M\) is contained in a finite number of solid half cylinders. Then, if \(P\) is a plane of Alexandrov symmetry for \(C\), whose direction is orthogonal to the axes of the cylinders, it holds that \(P\) is also a plane of symmetry for \(M\). In particular, if \(C\) is strictly convex and \(M\) is contained in a solid vertical cylinder, then \(M\) inherits all the symmetries of \(C\).
\end{theorem}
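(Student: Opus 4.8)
The plan is to run the Alexandrov reflection procedure for $M$ in the direction $\nu$ normal to $P$ and to push it all the way to $P$. After a rigid motion assume $\nu=e_1$ and $P=\{x_1=0\}$. First, a geometric remark: since the axis of each of the finitely many half-cylinders containing the ends of $M$ is orthogonal to $e_1$, each such solid half-cylinder lies in a slab $\{|x_1-c|\le r\}$; hence $M\subset\{|x_1|\le R\}$ for some $R>0$. By transversality $M\cap\mathcal{H}=C$, so $C$ is the bottom horizontal slice of $M$, and since $P$ is a plane of Alexandrov symmetry of $C$ the curve $C$ is symmetric about $\{x_1=0\}$ and, being a closed curve, about no other plane parallel to $P$. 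As $C$ is convex, so is the planar domain $D$ it bounds, and $D$ too is symmetric about $\{x_1=0\}$.

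Cap $M$ with $\overline D$ to obtain a properly embedded surface $M\cup\overline D$ bounding a region $W\subset\overline{\R^3_+}$ whose part near infinity lies in the half-cylinders, and consider the planes $\Pi_s:=\{x_1=s\}$. For $s<-R$, $\Pi_s$ misses $M\cup\overline D$; as $s$ increases, the reflection $(\overline M\cap\{x_1\le s\})^{*}$ of the cap below $\Pi_s$ through $\Pi_s$ is, at first, a graph contained in $W$. The claim is that this remains so as $s$ increases up to $0$. Convexity and the symmetry of $D$ about $\{x_1=0\}$ imply that for $s\le 0$ the reflection of $\overline D\cap\{x_1\le s\}$ through $\Pi_s$ stays inside $\overline D$ and meets $C$ only along $\Pi_s$, so the reflected cap cannot escape $W$ across the disc. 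The only other way the procedure could stop for some $s<0$ is by a tangential interior contact of the reflected cap with $M$ or a loss of the graph property; by the interior and boundary maximum principles available for elliptic Weingarten surfaces, either alternative forces $M$, hence also $C$, to be symmetric about $\Pi_s$, contradicting the previous paragraph unless $s=0$. Since for $s>0$ the reflection of $C\cap\{x_1\le s\}$ (now more than half of $C$) escapes $D$, the procedure must stop precisely at $\Pi_0=P$, and the Alexandrov reflection principle then yields that $P$ is a plane of symmetry for $M$.

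The main obstacle is the non-compactness of $M$: one must guarantee that the reflected caps do not ``escape through $M$ at infinity'' before $s$ reaches $0$, so that any first contact occurs at a finite point and the argument above is legitimate. This is where the confinement in the half-cylinders is used: reflection through $\Pi_s$ preserves the direction of every cylinder axis (these being orthogonal to $e_1$) and only translates the cylinder along $e_1$, so the reflected ends stay trapped in translated solid half-cylinders of the same slope; comparing these with the half-cylinders that trap the ends of $M$ — together with the sign condition on $H$ — rules out contact at infinity. In the particular case where $M$ is contained in a solid vertical cylinder, $M$ is vertically cylindrically bounded and this step is precisely Lemma \ref{lemmaalphafunction1}: either $M$ already has a plane of symmetry parallel to $P$ — which as above must equal $P$ — or the Alexandrov function $\alpha$ in the direction $\nu$ is strictly decreasing; but the procedure still stops, at $\Pi_{\alpha(0)}$, and since $C$ is convex with Alexandrov symmetry about $P$ one has $\alpha(0)=0$, so $M$ would still be symmetric about $P$, contradicting strict monotonicity.

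For the last assertion, assume $C$ is strictly convex and $M$ lies in a solid vertical cylinder. Any reflection symmetry of $C$ is realized by a vertical plane $Q$, and by strict convexity the Alexandrov procedure for $C$ in the direction normal to $Q$, run from either side, stops exactly at $Q$; thus $Q$ is a plane of Alexandrov symmetry of $C$ whose direction, being horizontal, is orthogonal to the vertical axis of the cylinder. The first part of the theorem then gives that $Q$ is a plane of symmetry of $M$, so $M$ inherits the reflection symmetries of $C$, hence all of its symmetries. In particular, if $C$ is a circle then $M$ is invariant under reflection in every vertical plane through the axis and is therefore rotational.
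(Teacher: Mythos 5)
Your overall strategy (reflect toward \(P\), apply the maximum principle, invoke the Korevaar--Kusner--Solomon monotonicity to control the ends) is the same as the paper's, but the proposal leaves the genuinely hard points unproved. The central gap is the treatment of the ends. In the general case, where the half cylinders need not be vertical, you write that comparing the reflected half cylinders with the originals ``together with the sign condition on \(H\) rules out contact at infinity'' --- but that is an assertion, not an argument: two ends confined to overlapping translated cylinders can approach each other asymptotically without touching, and the confinement alone does not prevent the reflected surface from ceasing to be a graph, or from touching \(M\), only ``at infinity''. The paper handles this by decomposing \(M\) into a compact core \(\tilde M\) and the pieces \(M\cap\mathcal{C}_i\), arranging matters so that Lemma \ref{lemmaalphafunction1} applies to each piece \emph{relative to the base plane \(\mathcal{D}_i\) of its (possibly tilted) cylinder}; strict monotonicity of the Alexandrov function on each end then forces any contact involving an end to be preceded by an interior contact at finite distance. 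You invoke Lemma \ref{lemmaalphafunction1} only in the special vertical case. Moreover, you never address the case of several ends lying in the same cylinder with nested boundary loops \(\sigma_i\): this is exactly where the hypothesis that \(H\) does not change sign enters (the paper derives a contradiction from the direction of the mean curvature vector along outer loops), whereas your proposal mentions ``the sign condition on \(H\)'' without using it in any verifiable way.

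A second, smaller but real, gap is the final step. When the procedure reaches \(\Pi_0=P\) with first contact occurring on \(C\), the reflected surface and \(M\) share the arc \(C\cap\{x_1\ge 0\}\) but need not be \emph{tangent} along it --- they may meet \(\mathcal{H}\) at different angles --- and without tangency the boundary maximum principle gives nothing (Hopf's lemma is consistent with a strictly positive normal derivative of the difference). The paper resolves this by running the procedure in both directions \(\pm\nu\) and using the definition of Alexandrov symmetry of \(C\) to force both to stop at \(P\), which squeezes the contact angles and yields tangency along \(C\); your one-sided argument skips this. (Minor: you also assume \(C\) convex in the first part, which is not a hypothesis there; Alexandrov symmetry of \(C\) is what is actually needed and suffices for the planar reflection step.)
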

\begin{proof}
	Let $\mathcal{C}_1$,..., $\mathcal{C}_n$ be the solid half cylinders containing \(M\) outside from a compact set, and define $\mathcal{D}_1$,.., $\mathcal{D}_n$ to be the bases of the cylinders. One can assume that \(M\) meets the boundaries of the \(\mathcal{C}_i\)'s only on the $\mathcal{D}_i$'s, transversely. Then, Lemma \ref{lemmaalphafunction1} applies to each of the\,\,\(M\cap\mathcal{C}_i\)'s.
	
	We may assume the \(\mathcal{C}_i\)'s to be pairwise disjoint. Indeed, if \(\mathcal{C}_i\cap \mathcal{C}_j\neq\emptyset\), \(i\neq j\), and the intersection is bounded, we can just slide \(\mathcal{C}_i\) along its axis, until it doesn't cross \(\mathcal{C}_j\) anymore. On the other hand, when \(\mathcal{C}_i\cap \mathcal{C}_j\) is unbounded, we have that \(\mathcal{C}_i\) and \(\mathcal{C}_j\) must have parallel axes (and they must diverge in the same direction). Then, there exists another solid half cylinder $\mathcal{C}$ which contains both \(\mathcal{C}_i\) and $\mathcal{C}_j$, and we can replace them both with $\mathcal{C}$.
	
	Denote by $\sigma_1$,..., \(\sigma_n\) the boundaries of the \(M\cap\mathcal{C}_1\),..., \(M\cap\mathcal{C}_n\). Observe that each\,\,$\sigma_{i}$ may be not connected.
	\begin{figure}[!htb]
		\centering

		\tikzset{every picture/.style={line width=0.75pt}} 
		
		\begin{tikzpicture}[x=0.75pt,y=0.75pt,yscale=-0.8,xscale=0.8]
			
			\draw   (365.22,211.71) .. controls (365.22,211.81) and (365.23,211.92) .. (365.23,212.02) .. controls (365.23,219.19) and (347.36,225) .. (325.33,225) .. controls (303.29,225) and (285.42,219.19) .. (285.42,212.02) .. controls (285.42,211.94) and (285.43,211.85) .. (285.43,211.77) ;  
			\draw  [dash pattern={on 4.5pt off 4.5pt}] (285.43,211.77) .. controls (285.84,204.72) and (303.55,199.05) .. (325.33,199.05) .. controls (347.09,199.05) and (364.79,204.71) .. (365.22,211.76) ;  
			\draw    (365.22,211.76) .. controls (368.62,197.87) and (384,194.09) .. (393.13,183.87) .. controls (402.25,173.64) and (415.84,148.18) .. (434,141.04) ;
			\draw   (378.22,126.33) .. controls (377.78,122.24) and (387.16,116.7) .. (390.79,117.84) .. controls (394.41,118.97) and (396.2,123.8) .. (407.78,126.97) .. controls (419.36,130.13) and (407.84,140.36) .. (396.26,137.2) .. controls (384.68,134.04) and (378.66,130.41) .. (378.22,126.33) -- cycle ;
			\draw   (420.8,120.18) .. controls (423.85,119.67) and (438.3,130.54) .. (436.37,137.84) .. controls (434.45,145.14) and (425.6,138.68) .. (421.85,131.58) .. controls (418.1,124.49) and (417.75,120.69) .. (420.8,120.18) -- cycle ;
			\draw    (407.78,126.97) .. controls (413.89,124.82) and (414.54,120.93) .. (420.8,120.18) ;
			\draw   (386.01,106.04) .. controls (386.69,103.47) and (399.13,108.18) .. (400.35,111.11) .. controls (401.57,114.05) and (401.22,115.63) .. (397.82,116.01) .. controls (394.42,116.39) and (387.29,111.2) .. (386.73,110.28) .. controls (386.18,109.37) and (385.34,108.6) .. (386.01,106.04) -- cycle ;
			\draw    (363.93,146.23) .. controls (370.04,144.09) and (372.37,127.42) .. (379.5,122.88) ;
			\draw    (390.53,154.02) .. controls (396.64,151.88) and (410,141.04) .. (412.59,131.31) ;
			\draw    (236.76,112.5) .. controls (251.69,132.61) and (263.68,155.09) .. (279.59,157.26) .. controls (295.49,159.44) and (299,154.4) .. (310,166.4) .. controls (321,178.4) and (334.76,146.91) .. (341.89,154.7) .. controls (349.03,162.48) and (360.71,147.56) .. (365.25,133.94) .. controls (369.79,120.31) and (384.39,107.07) .. (386.01,106.04) ;
			\draw   (203.41,143.33) .. controls (198.55,138.06) and (202.07,126.9) .. (211.28,118.38) .. controls (220.49,109.87) and (231.9,107.23) .. (236.76,112.5) .. controls (241.63,117.76) and (238.11,128.93) .. (228.9,137.44) .. controls (219.69,145.95) and (208.28,148.59) .. (203.41,143.33) -- cycle ;
			\draw    (285.43,211.77) .. controls (276.34,178.67) and (262.72,205.28) .. (241.31,192.3) .. controls (219.89,179.32) and (215.35,155.32) .. (203.41,143.33) ;
			\draw    (394.42,120.28) .. controls (395.07,120.28) and (395.07,117.69) .. (397.82,116.01) ;
			\draw  [dash pattern={on 4.5pt off 4.5pt}]  (224,175.6) .. controls (224.75,165.1) and (225.92,154.1) .. (219.8,143.72) ;
			\draw   (211,133.6) .. controls (208.37,130.75) and (210.33,124.65) .. (215.39,119.97) .. controls (220.45,115.3) and (226.68,113.82) .. (229.32,116.67) .. controls (231.95,119.52) and (229.99,125.62) .. (224.93,130.29) .. controls (219.87,134.97) and (213.63,136.45) .. (211,133.6) -- cycle ;
			\draw    (219.8,143.72) .. controls (217.77,140.26) and (214.93,136.88) .. (211,133.6) ;
			\draw    (237.28,127.24) .. controls (234.95,123.05) and (232.27,119.44) .. (229.32,116.67) ;
			\draw  [dash pattern={on 4.5pt off 4.5pt}]  (241.31,192.3) .. controls (250.88,174.12) and (247.34,145.32) .. (237.28,127.24) ;
			\draw    (224,175.6) .. controls (233,176.4) and (240,185.4) .. (241.31,192.3) ;
			\draw  [color={rgb, 255:red, 208; green, 2; blue, 27 }  ,draw opacity=1 ] (191.58,154.71) .. controls (184.94,147.52) and (192.17,130.04) .. (207.74,115.68) .. controls (223.3,101.32) and (241.3,95.51) .. (247.94,102.7) .. controls (254.58,109.9) and (247.34,127.38) .. (231.78,141.74) .. controls (216.22,156.1) and (198.22,161.91) .. (191.58,154.71) -- cycle ;
			\draw [color={rgb, 255:red, 208; green, 2; blue, 27 }  ,draw opacity=1 ] [dash pattern={on 4.5pt off 4.5pt}]  (115,71.72) -- (134.75,93.12) ;
			\draw [color={rgb, 255:red, 208; green, 2; blue, 27 }  ,draw opacity=1 ]   (134.75,93.12) -- (191.58,154.71) ;
			\draw  [color={rgb, 255:red, 208; green, 2; blue, 27 }  ,draw opacity=1 ] (443.96,156.01) .. controls (451.19,146.43) and (440.26,125.99) .. (419.55,110.35) .. controls (398.84,94.71) and (376.19,89.79) .. (368.96,99.37) .. controls (361.73,108.94) and (372.66,129.38) .. (393.37,145.02) .. controls (414.08,160.66) and (436.73,165.58) .. (443.96,156.01) -- cycle ;
			\draw [color={rgb, 255:red, 208; green, 2; blue, 27 }  ,draw opacity=1 ] [dash pattern={on 4.5pt off 4.5pt}]  (512.99,64.6) -- (495.19,88.17) ;
			\draw [color={rgb, 255:red, 208; green, 2; blue, 27 }  ,draw opacity=1 ]   (495.19,88.17) -- (443.96,156.01) ;
			\draw [color={rgb, 255:red, 208; green, 2; blue, 27 }  ,draw opacity=1 ] [dash pattern={on 4.5pt off 4.5pt}]  (171.36,19.72) -- (191.11,41.12) ;
			\draw [color={rgb, 255:red, 208; green, 2; blue, 27 }  ,draw opacity=1 ]   (191.11,41.12) -- (247.94,102.7) ;
			\draw [color={rgb, 255:red, 208; green, 2; blue, 27 }  ,draw opacity=1 ] [dash pattern={on 4.5pt off 4.5pt}]  (437.99,7.96) -- (420.19,31.53) ;
			\draw [color={rgb, 255:red, 208; green, 2; blue, 27 }  ,draw opacity=1 ]   (420.19,31.53) -- (368.96,99.37) ;
			
			\draw (268,211.4) node [anchor=north west][inner sep=0.75pt]  [xscale=1,yscale=1]  {$C$};
			\draw (307,139.4) node [anchor=north west][inner sep=0.75pt]  [xscale=1,yscale=1]  {$\tilde{M}$};
			\draw (215,48.4) node [anchor=north west][inner sep=0.75pt]  [xscale=1,yscale=1]  {$\mathcal{C}_{1}$};
			\draw (374,47.4) node [anchor=north west][inner sep=0.75pt]  [xscale=1,yscale=1]  {$\mathcal{C}_{2}$};
			\draw (192,98.4) node [anchor=north west][inner sep=0.75pt]  [xscale=1,yscale=1]  {$\sigma _{1}$};
			\draw (414,92.4) node [anchor=north west][inner sep=0.75pt]  [xscale=1,yscale=1]  {$\sigma _{2}$};

		\end{tikzpicture}
		\caption{The compact surface with boundary \(\tilde{M}\).}
		\label{figuretildem}
	\end{figure}
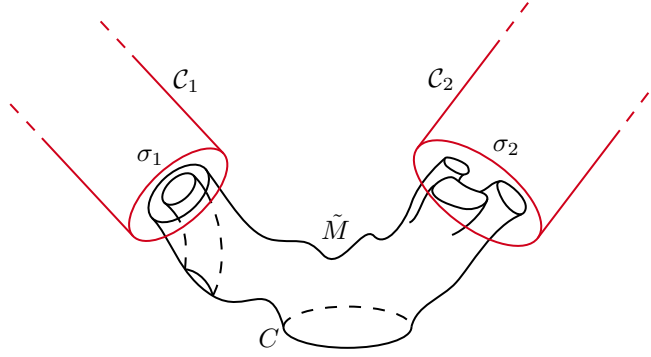
	
	Let \(P\) be a plane of Alexandrov symmetry for \(C\) whose direction $\nu$ is orthogonal to the axes of the cylinders \(\mathcal{C}_1\),...,\(\mathcal{C}_n\).
	
	Assume by contradiction that \(M\) is not symmetric with respect to \(P\). First observe that, if one among the \(M\cap\mathcal{C}_i\)'s was symmetric in the direction $\nu$, then this would imply a symmetry for a region of \(M\) containing \(C\). Since we assumed that \(M\) is not symmetric with respect to \(P\), this would imply that \(C\subsetneq M\cap\mathcal{H}\), which is absurd by the definition of \(M\). Therefore, by Lemma\,\,\ref{lemmaalphafunction1}, given a plane \(Q\) with direction $\nu$ and disjoint from \(M\), we can assume that, for every \(M\cap\mathcal{C}_i\), the corresponding function $\alpha$ is strictly decreasing.
	
	We can apply the Alexandrov reflection technique to \(M\), starting from the plane\,\,\(Q\), only looking at what happens in the bounded surface \(\tilde{M}\defeq M\setminus\bigcup_i(M\cap\mathcal{C}_i)\) (see Figure \ref{figuretildem}). Observe that the procedure stops for either:
	\begin{itemize}
		\item[(i)] An interior touching point for $\tilde{M}$;
		\item[(ii)] A touching point in some $\sigma_i$;
		\item[(iii)] A touching point in \(C\).
	\end{itemize}
	
	Now, in the cases (i) and (ii) the monotonicity of the Alexandrov functions implies that an interior touching point for \(M\) must have occurred. Thus, by an application of the interior maximum principle we would get a contradiction since we would have found a plane of symmetry in the direction $\nu$.
	
	We can then assume that the procedure stops, at the plane \(P\equiv Q(0)\), for a touching point in \(C\). By definition of Alexandrov symmetry, this means that the reflection\,\,\(C^*(0)\) coincides with \(C\cap\Pi_+(0)\). In general, this doesn't imply that the reflected $\tilde{M}^*(0)$ is tangent to $\tilde{M}$ along \(C\). Nevertheless, we can repeat the same argument in the direction \(-\nu\). By definition of Alexandrov symmetry for \(C\), the procedure stops at the same plane \(P\). This forces \(\tilde{M}^*(0)\) and $\tilde{M}$ to be tangent along \(C\). Application of the maximum principle at the boundary then implies that \(P\) is a plane of symmetry for \(M\).
\end{proof}
		The preceding theorem is sharp in the following sense. Consider a tilted cylinder having its boundary on an ellipse \(C\). Then, the surface shares only one of the two symmetries of its boundary, and it is the one detected by Theorem \ref{zero}. Observe, in addition, that the condition of being cylindrically bounded is quite natural. In fact, it is shown in \cite{saearpweingarten}, \cite{espinar} that every annular end of an elliptic Weingarten surface is cylindrically bounded when \(f(0)\neq0\):
	\begin{theoremletter}[\cite{saearpweingarten}, \cite{espinar}]\label{saearp1}
		Let \(A\approx\esse^1\times[0,1)\) be a properly embedded annulus in \(\R^3\), satisfying \(H=f(H^2-K)\), with \(f\) elliptic and such that \(f(0)\neq0\). Then, \(A\) is cylindrically bounded.
	\end{theoremletter}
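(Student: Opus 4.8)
The plan is to transport the Korevaar--Kusner--Solomon analysis of constant mean curvature annular ends \cite{Korevaar1989TheSO} to the Weingarten setting, with the rotational elliptic Weingarten surfaces of Fern\'andez and of Sa Earp--Toubiana playing the role of the round spheres and Delaunay surfaces. Three facts carry the argument. First, ellipticity of $H=f(H^2-K)$ supplies the interior and boundary maximum principles: two such surfaces tangent at an interior or boundary point, with one locally to one side of the other and with matching mean curvature vectors, coincide near that point. Second, $f(0)>0$ forces a two-sided curvature pinching; writing $t\defeq H^2-K\ge 0$ and denoting the principal curvatures by $k_1,k_2$, one has $|A|^2=k_1^2+k_2^2=2H^2+2(H^2-K)=2f(t)^2+2t$, which is strictly positive for all $t\ge 0$ and tends to $+\infty$, so there is $c_f>0$ with $|A|\ge c_f$ on every solution surface -- none of them is ever nearly flat. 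Together with the fact recalled in the introduction (from \cite{fernandezweingarten}, \cite{saearpfrench}) that $f(0)>0$ makes \emph{every} rotational such surface fit in a solid cylinder of a fixed radius $R_f$, and that among them there is a compact strictly convex ``Weingarten sphere'' $\S_f$, this gives a supply of barriers of uniformly bounded size and uniformly bounded, nowhere-vanishing curvature. Third, an a priori upper bound $\sup_E|A|<\infty$ on the annular end $E$: in the constant mean curvature case this is part of the Korevaar--Kusner--Solomon machinery, and in the elliptic Weingarten setting it rests on the curvature estimates available for embedded elliptic Weingarten surfaces (this estimate, rather than the maximum principle, is the analytic heart of the theorem).

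Write $A=K_0\cup E$ with $K_0$ a large compact subsurface and $E$ the annular end. Since $E$ is properly embedded it separates a neighbourhood of infinity, so after capping $E$ off by a compact surface we may fix once and for all the side towards which $\vec{H}$ points on $E$; it suffices to show $E$ lies in some solid cylinder. The pinching $c_f\le|A|\le C_f$ on $E$ already excludes asymptotic behaviour like that of the catenoid or of a plane: over any disc of radius $r_0$ small relative to $c_f^{-1}$ in a tangent plane, $E$ is a graph with controlled, definitely non-vanishing, curvature. A Meeks-type barrier argument -- sliding a large copy of $\S_f$ towards $E$ from infinity and invoking the maximum principle, using that $\S_f$ is compact while $E$ is not -- then confines $E$ to a slab and exhibits an asymptotic axis $\ell$, after which it remains only to bound the extent of $E$ transverse to $\ell$.

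Suppose this transverse extent is unbounded, and choose $p_k\in E$ whose distance to $\ell$ tends to infinity; after passing to a subsequence, the sign of the curvature of $E$ at $p_k$ relative to the chosen normal is constant. By the pinching, near $p_k$ the end $E$ is a fixed-size graph of curvature comparable to $R_f^{-1}$, and, according to that sign, one can place a rotational barrier -- $\S_f$, or a suitable Delaunay-type rotational Weingarten surface, all contained in cylinders of radius $\le R_f$ -- tangent to $E$ at $p_k$, lying locally to one side of $E$ over the entire graphing disc, with mean curvature vectors matched. The maximum principle then forces $E$ to coincide with the barrier near $p_k$; propagating this coincidence by unique continuation, $E$ would be contained in a compact surface, contradicting that it is unbounded. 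Hence $E$, and therefore $A$, is cylindrically bounded.

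The step I expect to be the main obstacle is precisely the interplay, in the non-compact regime, between that a priori upper curvature estimate on the end and the care needed to \emph{fit} a rotational barrier against a far-out graphical piece of $E$ whose curvature, although pinched from above and below, is not dominated below by that of a \emph{prescribed} barrier -- this is where the fine structure of the rotational family (which of its members has a neck of a given size, and on which side it bends) and the sign-of-curvature case distinction must be invoked. It is also here that the hypothesis $f(0)>0$ is indispensable: without it the rotational solutions need not lie in a uniform cylinder and the bound $|A|\ge c_f$ collapses -- for $f\equiv 0$ one is looking at minimal surfaces, whose annular ends (such as those of the catenoid or of a plane) are not cylindrically bounded, so the conclusion genuinely fails.
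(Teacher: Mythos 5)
First, a point of comparison: the paper does not prove this statement at all --- it is Theorem \ref{saearp1}, quoted from \cite{saearpweingarten} and \cite{espinar}, and the only indication of proof given is the remark that those references obtain it from \emph{height estimates} for compact pieces with boundary in a plane (proved in \cite{espinar} by Alexandrov reflection, using only ellipticity and \(f(0)>0\)), which are then fed into the Korevaar--Kusner--Solomon planar-reflection argument for annular ends. Your proposal replaces that route by a tangent-barrier argument, and as written it has two genuine gaps. The first you concede yourself: the a priori upper bound \(\sup_E|A|<\infty\) is simply assumed (``the analytic heart of the theorem''), so the hard input is deferred rather than supplied. The second is the decisive final step. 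You cannot, from a two-sided pinching \(c_f\le|A|\le C_f\) at a far-out point \(p_k\), ``place a rotational barrier tangent to \(E\) at \(p_k\), lying locally to one side of \(E\) \ldots with mean curvature vectors matched'': tangency together with local one-sidedness is precisely the \emph{hypothesis} of the comparison principle, and pointwise bounds on \(|A|\) do not let you arrange it --- generically \(E\) crosses any prescribed tangent comparison surface, since its second fundamental form at \(p_k\) need neither dominate nor be dominated by that of the barrier. You flag exactly this as ``the main obstacle'' in your closing paragraph, which is an admission that the argument is not closed. Even granting the tangency, the conclusion ``\(E\) would be contained in a compact surface'' only follows when the barrier is the compact Weingarten sphere; the Delaunay-type rotational barriers you also invoke are non-compact, so unique continuation yields no contradiction in that case.

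The intermediate step is also unjustified: sliding a compact rotational sphere from infinity toward a properly embedded, non-compact end and ``invoking the maximum principle'' fails at the first point of contact unless the mean curvature vectors there agree and one surface lies locally on one side of the other --- two tangent spheres touching externally satisfy the same equation and violate nothing --- and in any case confinement to a single slab does not produce an asymptotic axis; producing the axis is essentially equivalent to the theorem. What the cited proofs use instead, and what your sketch is missing, is a quantitative height estimate (every compact piece of an elliptic Weingarten surface with \(f(0)>0\) lying over a plane with boundary in that plane has height at most a constant \(c(f)\)), combined with Alexandrov reflection in families of parallel planes applied to the annulus itself. The curvature pinching \(|A|\ge c_f\), correct as your computation \(|A|^2=2f(t)^2+2t\) is, does not by itself confine anything: a long thin embedded tube wandering through space has \(|A|\) bounded below yet is not cylindrically bounded, so the global reflection machinery cannot be bypassed.
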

	We remark that Theorem \ref{saearp1} was first proved in \cite{saearpweingarten}, under the additional assumption that the Weingarten class satisfies height estimates. Those were proven, in the same article, under some additional hypotheses for the function \(f\).	Instead, in \cite{espinar}, the height estimates are proved by a clever application of the Alexandrov reflection principle, only requiring, besides the ellipticity, the Weingarten class to admit a sphere -- i.e. \(f(0)\neq0\).
	\section{The case of linear Weingarten surfaces}\label{sectionthesecondtheorem}
	
	 		In the following, we have to restrict ourselves to surfaces satisfying \(2aH+bK=1\), with \(a\), \(b\) positive constants. Note that, for the ellipticity, we would only need \(a^2+b>0\). Nevertheless, we ask for \(a,b>0\) because of some stronger results that we specify in the following. 
	 		
	 		Elliptic linear Weingarten surfaces satisfy the assumptions of Theorem \ref{saearp1}, and so every annular end of an elliptic linear Weingarten surface is contained in a solid half cylinder. This implies, first, that it makes sense to speak of a \textit{vertical} end or, more generally, of an end with a given direction: the direction of the end will be the axis of a solid cylinder which contains the end -- in addition, we call a vertical end \textit{positive} if it diverges in the upper halfspace and \textit{negative} if it diverges in the lower halfspace. Moreover, we strongly rely on the fact that the annular ends of a properly embedded, complete, linear Weingarten surface with \(a,b>0\) converge geometrically to a rotational Weingarten surface -- which is a part of the theory of Korevaar, Kusner and Solomon which has been developed, in the Weingarten setting, only for the linear Weingarten's with \(a,b>0\) \cite{saearpweingarten}:
	 		\begin{definition}
	 			Let \(A\) be a properly embedded, cylindrically bounded annulus in \(\R^3\), with axis vector $\nu$. Assume that \(A\) diverges only in the direction where $\nu$ is pointing. Given a diverging, non-decreasing sequence \(\{t_k\}_{k\in\N}\subset\R^+\), define \(A_k\defeq A-t_k\nu\). The family \(\{A_k\}_{k\in\N}\) will be called a \textit{slide-back sequence} for \(A\).
	 		\end{definition}
	 		\begin{theoremletter}[\cite{saearpweingarten},\cite{Korevaar1989TheSO}]\label{geometricconvergence}
	 			Let \(A\) be a properly embedded annulus in \(\R^3\) satisfying \(2aH+bK=1\), with \(a,b>0\), and \(\{A_k\}_{k\in\N}\) a slide-back sequence of \(A\). Then, there exists a rotational linear Weingarten surface \(W\), satisfying \(2aH+bK=1\) and sharing the same axis as \(A\), such that, for each compact subset \(K\subset W\), a subsequence of\,\(\{A_k\}_{k\in\N}\) converges, as normal graphs, to \(0\), in the \(C^k\) topology for each \(k\in\N\).
	 		\end{theoremletter}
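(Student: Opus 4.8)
The statement is the linear Weingarten analogue of the Korevaar--Kusner--Solomon description of the ends of properly embedded constant mean curvature surfaces \cite{Korevaar1989TheSO}, and the plan is to run the same scheme, substituting for the classical curvature estimates of cmc theory the a priori estimates for linear Weingarten surfaces from \cite{saearpweingarten}. By Theorem \ref{saearp1}, $A$ is contained in a solid half cylinder; after a rigid motion we may assume its axis is vertical and that $A$ diverges upwards, and we fix once and for all a solid vertical cylinder $\mathcal{Z}$ of some radius $R$ with $A\subset\mathcal{Z}$. Since each $A_k=A-t_k\nu$ is a vertical translate of $A$, we also have $A_k\subset\mathcal{Z}$, and $A_k$ diverges only upwards.

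\emph{Uniform estimates and a subsequential limit.} The equation $2aH+bK=1$ is elliptic, i.e. $4tf'(t)^2<1$ \cite{saearpfrench}, and the $A_k$ are properly embedded surfaces satisfying it, all confined to the fixed solid cylinder $\mathcal{Z}$. The a priori estimates for properly embedded linear Weingarten surfaces from \cite{saearpweingarten} then yield, in this situation, a uniform bound on the norm of the second fundamental form of the $A_k$ over compact subsets of $\R^3$; elliptic (Schauder) estimates for the Weingarten equation upgrade this to uniform bounds on all covariant derivatives of the second fundamental form. By Arzel\`a--Ascoli and a diagonal argument, a subsequence of $\{A_k\}$ -- which we keep denoting $\{A_k\}$ -- converges geometrically, in $C^\ell_{\mathrm{loc}}$ for every $\ell$, to a complete properly embedded surface $D\subset\mathcal{Z}$ (embeddedness of the limit follows from the usual unique continuation argument); the limit again satisfies $2aH+bK=1$, diverges only upwards, and is nonempty, since the end $A$ reaches arbitrarily large heights inside $\mathcal{Z}$ and hence the $A_k$ meet every fixed horizontal slab.

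\emph{The limit is rotational.} Fix a horizontal unit vector $w$ and consider, as in Section \ref{sectionpreliminaries}, the Alexandrov function $\alpha_w$ of the end $A$ for the vertical plane with normal $w$. By the Korevaar--Kusner--Solomon monotonicity for vertical annular ends \cite{Korevaar1989TheSO} (the mechanism behind Lemma \ref{lemmaalphafunction1}), either $A$ already has a vertical plane of symmetry orthogonal to $w$, or $\alpha_w$ is strictly monotone; in either case the cylindrical boundedness of $A$ forces $\alpha_w$ to be bounded, so $\alpha_w(t)$ has a finite limit $\alpha_w^\infty$ as $t\to+\infty$. A direct check shows that the Alexandrov function of the slide-back $A_k$ equals $t\mapsto\alpha_w(t+t_k)$, which converges, uniformly on compact $t$-intervals by monotonicity, to the constant $\alpha_w^\infty$; since the Alexandrov function varies continuously under $C^\ell_{\mathrm{loc}}$ convergence of surfaces, the Alexandrov function of $D$ in the direction $w$ is constant, so by Lemma \ref{lemmaalphafunction1} the surface $D$ has a vertical plane of symmetry orthogonal to $w$. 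As $w$ ranges over all horizontal directions, $D$ is confined to $\mathcal{Z}$, and $D$ diverges only upwards, a standard argument (compositions of these reflections are rotations about vertical axes, and a horizontal translation symmetry is impossible for a surface diverging only upwards inside $\mathcal{Z}$) shows that the closure of the isometry group of $D$ contains all rotations about a single vertical axis $\ell$; hence $D$ is a surface of revolution about $\ell$. Finally, since $A$ converges geometrically to $D$, the end of $A$ eventually lies in every solid cylinder about $\ell$ of radius larger than $R$, so $D$ is a rotational linear Weingarten surface sharing the axis of $A$.

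\emph{Conclusion and main difficulty.} By construction the chosen subsequence of $\{A_k\}$ converges to $D$ as normal graphs over compact subsets $K\subset D$, the graphing functions tending to $0$ in $C^\ell(K)$ for every $\ell$, which is exactly the assertion. The essential point is the uniform curvature estimate: unlike the cmc case, where it is classical, here one must invoke the a priori estimates for properly embedded linear Weingarten surfaces established in \cite{saearpweingarten}. A secondary delicate point is the continuity of the Alexandrov function under geometric convergence, together with the passage from reflective symmetry in every horizontal direction to full rotational symmetry of $D$.
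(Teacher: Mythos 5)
The paper does not prove Theorem \ref{geometricconvergence}: it is imported from \cite{saearpweingarten} and \cite{Korevaar1989TheSO} and stated without proof, so there is no in-paper argument to measure yours against. That said, your outline (slide back, extract a limit via curvature estimates and compactness, then force rotational symmetry through the Alexandrov function) is exactly the skeleton the author himself describes in Section \ref{sectionthesecondtheorem} when explaining why the result is only known for linear Weingarten surfaces, so the architecture is the right one.

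Two steps, however, are genuine gaps rather than routine verifications. First, the uniform bound on the second fundamental form of the $A_k$ is precisely the ingredient the paper identifies as the \emph{missing} piece for general elliptic Weingarten equations (``what is missing is, mainly, a bound on the curvature''); you dispose of it by citing ``a priori estimates from \cite{saearpweingarten}'' in a form that is not actually on the shelf. In the linear case the standard route --- hinted at in this paper by the remark that W-Delaunays are parallel surfaces to classical Delaunay surfaces --- is the Bonnet parallel-surface correspondence: a surface with $2aH+bK=1$, $a,b>0$, has a parallel surface of genuine constant mean curvature, one applies the actual Korevaar--Kusner--Solomon theorem there, and pulls the conclusion back. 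Your proof should either carry out that reduction (checking that properness, embeddedness and the annulus topology survive the parallel map) or establish the curvature estimate directly; as written it does neither. Second, the assertion that ``the Alexandrov function varies continuously under $C^\ell_{\mathrm{loc}}$ convergence of surfaces'' is false in general: $\alpha_1$ is built from first and second hitting times of lines and can jump when tangencies or new sheets appear in the limit. Handling this --- by running the monotonicity and the plane-of-symmetry argument on the sequence itself rather than on a limit of Alexandrov functions --- is the technical heart of \cite{Korevaar1989TheSO}, which you flag as ``delicate'' but do not resolve. The remaining points (nonemptiness and embeddedness of the limit, concurrence of the symmetry planes, identification of the axis) are standard once these two are in place.
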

	 			 		
	 		The embedded rotational linear Weingarten surfaces have been described in \cite{saearpweingarten}, and, for \(a>0\) and \(b>0\) fixed in the linear Weingarten equation, they behave just like the classical Delaunay surfaces \cite{Delaunay1841}, \cite{Korevaar1989TheSO}: they form a one parameter family of rotational, periodic surfaces, ranging between a cylinder and a stack of \textit{American footballs} \cite{fernandezweingarten} \cite{saearpweingarten} -- the latter failing to be embedded. Except for the cylinder, they have a series of alternating necks and bulges, where the distance from the axis of rotation -- the radius -- attains, respectively, its minimum and maximum (we will speak of \textit{small} and \textit{big} radius) and the normal to the surface is orthogonal to the axis of rotation. From now on, we will call\textit{W-Delaunay}'s the rotational linear Weingarten surfaces with \(a,b>0\). We remark that W-Delaunay's are parallel surfaces to classical Delaunay surfaces \cite{saearpweingarten}.
	 		
 	 		We don't know if the convergence of the annular ends to a rotational Weingarten surface can be proved for a more general Weingarten equation (see \textit{Question 2} in \cite{saearpfrench}) -- not even in the linear case, assuming only \(a^2+b>0\), i.e. the condition for ellipticity. The question arises naturally, since many classes of Weingarten surfaces admit Delaunay-like surfaces, under some hypothesis on \(f\) \cite{saearpfrench}. To follow the proof of \cite{Korevaar1989TheSO}, what is missing is, mainly, a bound on the curvature. Then, one would use a compactness argument for a sequence of functions satisfying the Weingarten equation. This last issue is addressed in \cite{fernandezgalvezmira} for a uniformly elliptic equation.
	 		
	 		Besides the convergence, what one would like to use is flux formula, which is a classical tool in the cmc theory. Just like the convergence, flux formula has been described in \cite{saearpweingarten} only for the linear Weingarten's with \(a,b>0\), and in \cite{someproblems} it is described as an open problem to decide which classes of Weingarten surfaces admit a ``good''  flux\,\,formula.
	 		
	 		Observe that, on a linear Weingarten surface with \(a,b>0\), the mean curvature is nowhere vanishing. Indeed, at a point \(p\) for which \(H(p)=0\), the Gauss curvature \(K\) should verify \(bK(p)=1\), with \(b>0\), which is absurd since we know that \(K(p)\leq0\).
	 		
	 		In the following lemma we compute what, in the language of \cite{Korevaar1989TheSO}, is called the \textit{mass} of the end of a W-Delaunay surface. The concept of mass of an end is strictly related to flux formula and, in fact, we use it in the proof of Theorem \ref{two} to evaluate the contribution of the ends to the flux.

\begin{lemma}\label{lemmamassadelaunay}
	Consider a vertical linear W-Delaunay \(M\), oriented according to its mean curvature vector. Let \(K\) be a compact, connected, orientable surface having its boundary on a simple loop $\Gamma$ in \(M\), being non-contractible in \(M\). Now, \(\Gamma\) separates \(M\) in two annular ends: let \(A\) be the positive one -- that is, the one diverging in the upper halfspace. Let $\nu_\Gamma$ be the conormal to $\Gamma$ pointing toward \(A\) and let \(n_K\) be the normal vector field to \(K\) making \(\bar{K}\cup A\) an oriented surface (with corners). Then, it holds that:
\begin{equation}\label{14gennaio}
		\int_K\pe{e_3,n_K}-\frac{1}{2}\int_\Gamma\pe{e_3,(2a+bT)\nu_\Gamma}=\pi(Rr+b).
\end{equation}
where \(R\) and \(r\) are the big and small radii of \(M\).
\end{lemma}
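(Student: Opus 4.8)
The plan is to read the left-hand side of \eqref{14gennaio} as a homological invariant of $M$ --- the flux of $M$ in the direction $e_3$, i.e.\ the mass of the end $A$ in the sense of \cite{Korevaar1989TheSO} --- and then to evaluate it on the most convenient pair $(\Gamma,K)$: a horizontal circle through a bulge of $M$ together with the flat disc it bounds. The term $\int_K\pe{e_3,n_K}$ depends only on the oriented boundary $\Gamma=\partial K$, because $e_3$ is parallel: two admissible choices of $K$ cobound a solid $V$ and differ by $\pm\int_V\mathrm{div}\,e_3=0$.

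That the whole left-hand side is unchanged when $\Gamma$ is replaced by a homologous loop is the content of the flux formula for linear Weingarten surfaces \cite{saearpweingarten}, which one may see as follows. Let $S$ be the shape operator of $M$ for its mean curvature normal $N$ and let $T=2H\,\Id-S$ be the first Newton transformation; by the Codazzi equation $T$ is divergence free on $M$, so for the tangential part $e_3^{\top}$ of $e_3$,
\[
\mathrm{div}_M\!\big((2a\,\Id+bT)(e_3^{\top})\big)=\big(2a\,\tr S+b\,\tr(TS)\big)\pe{e_3,N}=2(2aH+bK)\pe{e_3,N}=2\pe{e_3,N},
\]
the last step by $2aH+bK=1$. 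Applying the divergence theorem on the annulus of $M$ between $\Gamma$ and a homologous loop $\Gamma'$, and noting that passing from $\Gamma$ to $\Gamma'$ changes $\int_K\pe{e_3,n_K}$ by exactly $\int\pe{e_3,N}$ over that annulus, the two variations cancel; hence it suffices to compute the expression for one convenient pair.

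Take $\Gamma$ to be the horizontal circle of $M$ through a bulge, of radius $R$, and $K$ the flat disc it bounds. Along $\Gamma$ the tangent plane of $M$ is vertical, so $\nu_\Gamma$ and $n_K$ are vertical; moreover $\nu_\Gamma$ is a principal direction, and since $T=2H\,\Id-S$ interchanges the two principal curvatures, $T\nu_\Gamma=\tfrac1R\nu_\Gamma$, where $\tfrac1R$ is the principal curvature along the parallel through the bulge. Thus the two integrals are $\pi R^{2}$ and $\tfrac12\big(2a+\tfrac bR\big)\cdot2\pi R=\pi(2aR+b)$ in absolute value, and, with the orientations fixed by the statement, the left-hand side of \eqref{14gennaio} equals $\pi(2aR+b-R^{2})$. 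To identify this with $\pi(Rr+b)$ one uses the rotational structure of $M$: writing its profile in arclength as $\rho'=\cos\psi$, $h'=\sin\psi$, the principal curvatures are $\kappa_1=\psi'$ and $\kappa_2=\sin\psi/\rho$, so $2aH+bK=1$ reads $\psi'(a\rho+b\sin\psi)=\rho-a\sin\psi$; feeding this into the derivative of $\rho^{2}-2a\rho\sin\psi-b\sin^{2}\psi$ shows that this quantity is constant along $M$. Evaluating the constant at a neck ($\rho=r$, $\sin\psi=1$) and at a bulge ($\rho=R$, $\sin\psi=1$) gives $R^{2}-2aR=r^{2}-2ar$, i.e.\ $R+r=2a$; hence $2aR+b-R^{2}=R(2a-R)+b=Rr+b$, which together with the previous step proves \eqref{14gennaio}.

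The main obstacle is the orientation bookkeeping in the two middle steps: one must fix the directions of $\nu_\Gamma$, of $n_K$ and of $T$ consistently, so that the combination really is the homological invariant it is meant to be, and then carry these signs through the bulge computation. The geometric inputs themselves --- the Codazzi-plus-Weingarten divergence identity, the first integral of the profile ODE, and the relation $R+r=2a$ it produces --- are routine.
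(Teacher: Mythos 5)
Your proposal is correct and follows the same overall strategy as the paper: establish that the left-hand side of \eqref{14gennaio} is invariant under replacing \((\Gamma,K)\) by a homologous pair, then evaluate it on the parallel through a bulge with its flat disc, obtaining \(\pi(2aR+b-R^2)\), and finally convert this to \(\pi(Rr+b)\) via the relation \(R+r=2a\). You differ from the paper in how you justify two of the sub-steps, and both of your variants are sound. First, where the paper simply cites the flux formula of Rosenberg--Sa Earp to get the homotopy invariance, you re-derive it from the divergence identity \(\mathrm{div}_M\bigl((2a\,\mathrm{Id}+bT)e_3^{\top}\bigr)=2(2aH+bK)\pe{e_3,N}\) for the Codazzi tensor \(T\); this is a correct computation (\(\mathrm{tr}(TS)=2K\), \(\mathrm{tr}\,S=2H\)) and makes the lemma self-contained at the cost of redoing cited material. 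Second, and more substantively, the paper obtains \(R+r=2a\) by evaluating the (already proven invariant) left-hand side at a neck as well as at a bulge and equating \(R^2-2aR-b=r^2-2ar-b\), whereas you obtain the same identity from the first integral \(\rho^2-2a\rho\sin\psi-b\sin^2\psi\) of the profile ODE; your ODE manipulation checks out, and it has the minor advantage of not relying a second time on the invariance statement. One small point the paper addresses and you do not: the degenerate case \(R=r\) (the cylinder), where neither the neck--bulge comparison nor your first integral forces \(R+r=2a\); the paper handles it by continuity within the one-parameter family, while in your setup it follows directly from \(2aH=1\) on the cylinder giving \(R=a\). Finally, you are candid that the orientation bookkeeping is the delicate part and you do not fully carry it out; the paper is equally terse there, so this is not a gap relative to the source, but if you want the argument to be airtight you should fix once and for all that the mean curvature vector of a W-Delaunay points toward its axis and trace the signs of \(n_K\) and \(\nu_\Gamma\) through the bulge computation explicitly.
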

\begin{proof}
	First observe that the left-hand side of \eqref{14gennaio} is, in a sense, homotopically invariant. Indeed, consider a smooth simple loop $\tilde{\Gamma}$ on \(M\), homotopic to \(\Gamma\) and disjoint from $\Gamma$, and let \(\tilde{K}\) be a compact, connected, orientable surface having its boundary on $\tilde{\Gamma}$. Now, $\Gamma$ and $\tilde{\Gamma}$ bound a connected surface \(B\), homeomorphic to an annulus. We apply flux formula \cite{saearpweingarten} to the oriented compact cycle obtained by joining \(K\) and $\tilde{K}$ to \(B\), and choosing the right orientation on the formers. One has:
	\begin{equation}
		\int_K\pe{e_3,n_K}-\frac{1}{2}\int_\Gamma\pe{e_3,(2a+bT)\nu_\Gamma}=\int_{\tilde{K}}\pe{e_3,n_{\tilde{K}}}-\frac{1}{2}\int_{\tilde{\Gamma}}\pe{e_3,(2a+bT)\nu_{\tilde{\Gamma}}}
	\end{equation}
	where \(n_{\tilde{K}}\) and \(\nu_{\tilde{\Gamma}}\) are chosen according to \(n_K\) and \(\nu_\Gamma\): the annular end $\tilde{A}$, as in the statement, is the positive annular end bounded by $\tilde{\Gamma}$ on \(M\).
	
	Thus, we can compute\,\(\int_{K}\pe{e_3,n_{K}}-\frac{1}{2}\int_{\Gamma}\pe{e_3,(2a+bT)\nu_\Gamma}\) assuming \(\Gamma\) is a parallel with vertical co-normal \(\nu\) and \(K\) a planar disc. We fix $\Gamma$ to be the parallel at a bulge, and \(K\) the planar disc with boundary $\Gamma$. We start by computing \((2a+bT)\nu_\Gamma\). Recall that\,\(T=2HI-A\), \(A\) being the shape operator of the surface. Now, on a surface of revolution, the principal curvatures are \(-\frac{d\psi}{ds}\) and \(\frac{\cos\psi}{y}\), where $\psi$ is the angle that the generating curve makes with the axis of rotation and \(y\) is the radius at the point. Moreover, if we consider the standard parametrization of a surface of revolution, we see that \(A\) is diagonal, and $\nu_\Gamma$ is a multiple of a coordinate vector field, so that it is an eigenvector of \(A\). Thus:
	\[\left(2a+bT\right)\nu_\Gamma=\left(2a+bR^{-1}\right)\nu_\Gamma\]
	and
	\begin{equation}\label{wdel}
		\int_{K}\pe{e_3,n_{K}}-\frac{1}{2}\int_{\Gamma}\pe{e_3,(2a+bT)\nu_\Gamma}=-\pi\left(R^2-2aR-b\right).
	\end{equation}
	We remark that, by the homotopical invariance, we could have computed last integral also on a neck. Thus, if \(r\) is the radius of a neck:
	\[R^2-2aR-b=r^2-2ar-b,\]
	so that we get the following relation between the two radii: \begin{equation}\label{relation}
		R+r=2a.
	\end{equation}
	This holds, in principle, only when the two radii are different. But, since the W-Delaunay's form a one parameter family, the same relation holds, by continuity, also for the cylinder (\(R=r\)). This concludes the proof.
\end{proof}

 Next lemma is used in the proof of Theorem \ref{two} to estimate the number of intersections of a surface with a plane, under some topological conditions.

\begin{lemma}\label{lemmacountingloops}
	Let \(r\) be a properly embedded curve in \(\R^3_+\), homeomorphic to \([0,1)\), having its endpoint \(p\) on the plane \(\mathcal{H}\) and being transverse to $\mathcal{H}$ in \(p\). Consider a compact connected orientable surface \(S\) in \(\R^3_+\setminus r\) having its boundary on the plane \(\mathcal{H}\) and being transverse to the plane. Then, the number of loops in $\partial S$ being non-contractible in $\mathcal{H}\setminus\{p\}$ is either even or zero.
\end{lemma}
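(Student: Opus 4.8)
The plan is to reduce the statement to an elementary linking-number count against a line manufactured from the ray \(r\). First I would extend \(r\) to a properly embedded line \(\hat r\cong\R\) in \(\R^3\): using that \(r\) is transverse to \(\mathcal{H}\) at \(p\), I attach to \(r\) at the point \(p\) a straight ray \(\rho\) pointing into the lower halfspace and put \(\hat r\defeq r\cup\rho\). Since \(r\) is properly embedded in \(\R^3_+\) with its only endpoint at \(p\), it meets \(\mathcal{H}\) only at \(p\), so \(\rho\subset\{z\le 0\}\) is glued along \(\{p\}\) and \(\hat r\cap\mathcal{H}=\{p\}\). The geometric heart of the matter is then the observation that \(S\) is disjoint from \(\hat r\): by hypothesis \(S\cap r=\emptyset\), and since \(S\subset\R^3_+=\{z\ge 0\}\) we get \(S\cap\rho\subseteq S\cap\{z=0\}\cap\rho\subseteq\{p\}\), while \(p\in r\) forces \(p\notin S\).

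Next I would recall the elementary linking number. For a compact oriented \(1\)-cycle \(\gamma\subset\R^3\setminus\hat r\), set \(\mathrm{lk}(\gamma,\hat r)\defeq F\cdot\hat r\), the algebraic intersection number of \(\hat r\) with any compact oriented surface \(F\subset\R^3\) having \(\partial F=\gamma\) (one exists since \(H_1(\R^3)=0\)). This is well defined because two such surfaces differ by \(\partial W\) for a compact \(3\)-chain \(W\), and \(\hat r\), having both of its ends outside the compact set \(W\), crosses \(\partial W\) with total algebraic multiplicity zero; it is also additive over the connected components of \(\gamma\). Applying this to \(F=S\) (which misses \(\hat r\)) gives \(\mathrm{lk}(\partial S,\hat r)=0\).

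To finish I would orient \(S\) and split \(\partial S=\gamma_1+\dots+\gamma_m+\delta_1+\dots+\delta_k\) into the loops \(\gamma_i\) non-contractible in \(\mathcal{H}\setminus\{p\}\) and the loops \(\delta_j\) contractible there — every component of \(\partial S\) is a simple closed curve in \(\mathcal{H}\cong\R^2\) avoiding \(p\), hence falls into exactly one class according to whether it encloses \(p\). Each \(\gamma_i\) bounds a planar disc \(\Delta_i\subset\mathcal{H}\) with \(p\in\mathrm{int}\,\Delta_i\), so \(\Delta_i\cap\hat r=\Delta_i\cap\mathcal{H}\cap\hat r=\{p\}\), a single transverse intersection, and \(\mathrm{lk}(\gamma_i,\hat r)=\varepsilon_i\in\{+1,-1\}\); each \(\delta_j\) bounds a planar disc in \(\mathcal{H}\) missing \(p\), so \(\mathrm{lk}(\delta_j,\hat r)=0\). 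Adding up and using additivity, \(0=\mathrm{lk}(\partial S,\hat r)=\sum_{i=1}^m\varepsilon_i\), and a sum of \(m\) terms each equal to \(\pm 1\) vanishes only when \(m\) is even (with \(m=0\) allowed).

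I do not expect a genuine obstacle: the argument is soft. The two points that need a moment's care are the well-definedness of the linking number when one of the two curves is a non-compact properly embedded line — handled by the ``ends at infinity'' remark — and the claim that \(S\) really misses the downward prolongation \(\rho\), which relies on \(S\) lying in the closed upper halfspace and on \(r\cap\mathcal{H}=\{p\}\). If one preferred, the whole count could be run with \(\Z/2\) coefficients, dispensing with the orientability of \(S\).
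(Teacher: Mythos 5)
Your argument is correct, and it shares the paper's central device --- prolonging \(r\) below \(\mathcal{H}\) by a downward ray to obtain a properly embedded line and then counting intersections of that line with surfaces bounded by \(\partial S\) --- but the implementation is genuinely different. The paper works with \(\mathbb{Z}/2\) parity for an \emph{embedded closed} surface: it caps the contractible loops by their planar interiors (pushing nested caps to distinct negative heights on vertical cylinders to avoid self-intersections), pairs up consecutive nested non-contractible loops by planar annuli, and derives a contradiction from the fact that a closed embedded surface meets \(\tilde{r}\) transversally in an even number of points; this forces a case split on whether the odd count is \(1\) or \(2k+1\) and some care about how the caps can be made disjoint. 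You instead invoke the homological well-definedness of the linking number \(\mathrm{lk}(\cdot,\hat r)\) in \(\R^3\setminus\hat r\), evaluate it once on \(S\) itself (getting \(0\)) and once by capping \emph{each} boundary loop separately with its planar disc (getting \(\sum_i\varepsilon_i\) with \(\varepsilon_i=\pm1\) exactly for the loops enclosing \(p\)). This buys you quite a bit: no embedded closed surface has to be constructed, overlapping or nested caps cost nothing, no case analysis is needed, and the signed count localizes the whole lemma to the single identity \(\sum_i\varepsilon_i=0\); as you note, running it mod \(2\) even removes the orientability hypothesis. What the paper's route buys in exchange is that it stays entirely at the level of Jordan--Brouwer separation, with no appeal to \(H_1\) of a line complement or to the well-definedness of linking numbers for non-compact curves (which you do have to justify, as you correctly do via the ``both ends escape every compact \(3\)-chain'' remark). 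The only point I would make explicit is the one you already flag: the conclusion \(\hat r\cap\mathcal{H}=\{p\}\) uses that the interior of \(r\) stays in the open upper halfspace, which is the intended reading of the hypotheses and is assumed implicitly in the paper's own proof as well.
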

\begin{proof}
	Assume that there exists a simple loop $\gamma$ in $\partial S$, being non-contractible in \(\mathcal{H}\setminus\{p\}\). Let \(\tilde{r}\) be the proper curve obtained by joining to \(r\) the vertical, negative half line orthogonal to $\mathcal{H}$ and with endpoint \(p\). We may assume that \(r\) joins smoothly with the half line.
	
	Observe that, given an embedded closed surface \(\Sigma\) in \(\R^3\), intersecting $\tilde{r}$ transversely in a finite number of points, it must hold that the cardinality of $\tilde{r}\cap\Sigma$ is even. We now show that, if the cardinality of the non-contractible loops in $\partial S$ was odd, then one would be able to construct a certain embedded closed surface, meeting $\tilde{r}$ transversely, with an odd number of intersections with the curve.
		\begin{figure}[!htb]

		\tikzset{every picture/.style={line width=0.6pt}} 
		
		\begin{tikzpicture}[x=0.75pt,y=0.75pt,yscale=-0.8,xscale=0.8]
			
			\draw    (191.37,215) -- (466.02,215) ;
			\draw    (191.37,215) -- (247.23,153.65) ;
			\draw   (229.48,188.7) .. controls (229.48,186.04) and (237.49,183.88) .. (247.38,183.88) .. controls (257.26,183.88) and (265.28,186.04) .. (265.28,188.7) .. controls (265.28,191.36) and (257.26,193.51) .. (247.38,193.51) .. controls (237.49,193.51) and (229.48,191.36) .. (229.48,188.7) -- cycle ;
			\draw   (236.48,188.7) .. controls (236.48,187.4) and (240.34,186.35) .. (245.12,186.35) .. controls (249.89,186.35) and (253.76,187.4) .. (253.76,188.7) .. controls (253.76,189.99) and (249.89,191.04) .. (245.12,191.04) .. controls (240.34,191.04) and (236.48,189.99) .. (236.48,188.7) -- cycle ;
			\draw [color={rgb, 255:red, 208; green, 2; blue, 27 }  ,draw opacity=1 ]   (229.48,188.7) -- (229.48,201.99) ;
			\draw [color={rgb, 255:red, 208; green, 2; blue, 27 }  ,draw opacity=1 ]   (265.28,188.7) -- (265.28,201.99) ;
			\draw  [color={rgb, 255:red, 208; green, 2; blue, 27 }  ,draw opacity=1 ] (264.96,201.07) .. controls (265.17,201.37) and (265.28,201.67) .. (265.28,201.99) .. controls (265.28,204.65) and (257.26,206.8) .. (247.38,206.8) .. controls (237.49,206.8) and (229.48,204.65) .. (229.48,201.99) .. controls (229.48,201.39) and (229.88,200.82) .. (230.62,200.29) ;  
			\draw  [color={rgb, 255:red, 208; green, 2; blue, 27 }  ,draw opacity=1 ][dash pattern={on 4.5pt off 4.5pt}] (229.58,202.5) .. controls (229.52,202.33) and (229.48,202.16) .. (229.48,201.99) .. controls (229.48,199.33) and (237.49,197.17) .. (247.38,197.17) .. controls (257.26,197.17) and (265.28,199.33) .. (265.28,201.99) .. controls (265.28,202.1) and (265.26,202.21) .. (265.23,202.33) ;  
			\draw  [color={rgb, 255:red, 208; green, 2; blue, 27 }  ,draw opacity=1 ] (253.76,196.51) .. controls (253.79,196.6) and (253.81,196.69) .. (253.81,196.79) .. controls (253.81,198.08) and (249.95,199.13) .. (245.17,199.13) .. controls (240.6,199.13) and (236.86,198.17) .. (236.55,196.95) ;  
			\draw [color={rgb, 255:red, 208; green, 2; blue, 27 }  ,draw opacity=1 ]   (236.48,188.7) -- (236.48,196.51) ;
			\draw [color={rgb, 255:red, 208; green, 2; blue, 27 }  ,draw opacity=1 ]   (253.76,188.7) -- (253.76,196.51) ;
			\draw  [color={rgb, 255:red, 208; green, 2; blue, 27 }  ,draw opacity=1 ][dash pattern={on 4.5pt off 4.5pt}] (237.22,197.7) .. controls (236.78,197.42) and (236.53,197.11) .. (236.53,196.79) .. controls (236.53,195.49) and (240.4,194.44) .. (245.17,194.44) .. controls (249.95,194.44) and (253.81,195.49) .. (253.81,196.79) .. controls (253.81,196.83) and (253.81,196.87) .. (253.8,196.9) ;  
			\draw    (265.28,188.7) .. controls (264.87,181) and (269.39,183.47) .. (270.21,181.41) ;
			\draw    (229.48,188.7) .. controls (225.37,182.24) and (227.42,176.48) .. (223.31,175.24) ;
			\draw    (236.48,188.7) .. controls (236.48,183.47) and (231.13,182.24) .. (230.72,177.3) ;
			\draw    (253.76,188.7) .. controls (258.28,185.12) and (250.05,181.83) .. (257.87,177.71) ;
			\draw    (342.14,173.31) .. controls (357.29,155.21) and (346.85,135.42) .. (340.65,117.2) ;
			\draw  [dash pattern={on 4.5pt off 4.5pt}]  (340.65,117.2) .. controls (335.84,103.07) and (333.59,89.89) .. (347.85,79.2) ;
			\node[draw, circle, inner sep=0.5pt, fill] at (341.69,173.31) {};
			
			\draw   (316.41,164.3) .. controls (342.79,160.62) and (394.64,151.7) .. (398.84,164.82) .. controls (403.04,177.95) and (412.49,174.27) .. (418.14,181.17) .. controls (423.79,188.06) and (325.82,192.21) .. (299.44,181.17) .. controls (273.06,170.13) and (290.03,167.98) .. (316.41,164.3) -- cycle ;
			\draw    (286.06,172.32) .. controls (286.59,152.01) and (282.13,142.34) .. (268.72,138.61) ;
			\draw    (418.14,181.17) .. controls (417.6,157.73) and (422.13,146.55) .. (435.71,142.25) ;
			\draw   (428.27,181.25) .. controls (428.27,178.59) and (436.28,176.44) .. (446.16,176.44) .. controls (456.05,176.44) and (464.06,178.59) .. (464.06,181.25) .. controls (464.06,183.91) and (456.05,186.07) .. (446.16,186.07) .. controls (436.28,186.07) and (428.27,183.91) .. (428.27,181.25) -- cycle ;
			\draw   (435.26,181.25) .. controls (435.26,179.96) and (439.13,178.91) .. (443.9,178.91) .. controls (448.67,178.91) and (452.54,179.96) .. (452.54,181.25) .. controls (452.54,182.55) and (448.67,183.6) .. (443.9,183.6) .. controls (439.13,183.6) and (435.26,182.55) .. (435.26,181.25) -- cycle ;
			\draw [color={rgb, 255:red, 208; green, 2; blue, 27 }  ,draw opacity=1 ]   (428.27,181.25) -- (428.27,194.54) ;
			\draw [color={rgb, 255:red, 208; green, 2; blue, 27 }  ,draw opacity=1 ]   (464.06,181.25) -- (464.06,194.54) ;
			\draw  [color={rgb, 255:red, 208; green, 2; blue, 27 }  ,draw opacity=1 ] (463.74,193.63) .. controls (463.95,193.92) and (464.06,194.23) .. (464.06,194.54) .. controls (464.06,197.2) and (456.05,199.36) .. (446.16,199.36) .. controls (436.28,199.36) and (428.27,197.2) .. (428.27,194.54) .. controls (428.27,193.94) and (428.67,193.37) .. (429.41,192.85) ;  
			\draw  [color={rgb, 255:red, 208; green, 2; blue, 27 }  ,draw opacity=1 ][dash pattern={on 4.5pt off 4.5pt}] (428.37,195.06) .. controls (428.3,194.89) and (428.27,194.72) .. (428.27,194.54) .. controls (428.27,191.88) and (436.28,189.73) .. (446.16,189.73) .. controls (456.05,189.73) and (464.06,191.88) .. (464.06,194.54) .. controls (464.06,194.66) and (464.05,194.77) .. (464.02,194.88) ;  
			\draw  [color={rgb, 255:red, 208; green, 2; blue, 27 }  ,draw opacity=1 ] (452.54,189.07) .. controls (452.58,189.16) and (452.6,189.25) .. (452.6,189.34) .. controls (452.6,190.64) and (448.73,191.69) .. (443.96,191.69) .. controls (439.39,191.69) and (435.64,190.72) .. (435.34,189.5) ;  
			\draw [color={rgb, 255:red, 208; green, 2; blue, 27 }  ,draw opacity=1 ]   (435.26,181.25) -- (435.26,189.07) ;
			\draw [color={rgb, 255:red, 208; green, 2; blue, 27 }  ,draw opacity=1 ]   (452.54,181.25) -- (452.54,189.07) ;
			\draw  [color={rgb, 255:red, 208; green, 2; blue, 27 }  ,draw opacity=1 ][dash pattern={on 4.5pt off 4.5pt}] (436,190.26) .. controls (435.56,189.98) and (435.32,189.67) .. (435.32,189.34) .. controls (435.32,188.05) and (439.19,187) .. (443.96,187) .. controls (448.73,187) and (452.6,188.05) .. (452.6,189.34) .. controls (452.6,189.38) and (452.6,189.42) .. (452.59,189.46) ;  
			\draw    (464.06,181.25) .. controls (463.65,173.56) and (468.18,176.03) .. (469,173.97) ;
			\draw    (428.27,181.25) .. controls (424.15,174.79) and (426.21,169.03) .. (422.1,167.8) ;
			\draw    (435.26,181.25) .. controls (435.26,176.03) and (429.91,174.79) .. (429.5,169.85) ;
			\draw    (452.54,181.25) .. controls (457.07,177.67) and (448.84,174.38) .. (456.66,170.27) ;
			
			\draw (188.84,185.13) node [anchor=north west][inner sep=0.75pt]  [xscale=0.8,yscale=0.8]  {$\mathcal{H}$};
			\draw (329.73,164.96) node [anchor=north west][inner sep=0.75pt]  [xscale=0.8,yscale=0.8]  {$p$};
			\draw (292.51,184) node [anchor=north west][inner sep=0.75pt]  [xscale=0.8,yscale=0.8]  {$\gamma $};
			\draw (343.88,105.4) node [anchor=north west][inner sep=0.75pt]  [xscale=0.8,yscale=0.8]  {$r$};
			\draw (264.96,144) node [anchor=north west][inner sep=0.75pt]  [xscale=0.8,yscale=0.8]  {$S$};
			\draw (236.64,119.55) node [anchor=north west][inner sep=0.75pt]  [xscale=1,yscale=1]  {$...$};
			\draw (446.6,144.86) node [anchor=north west][inner sep=0.75pt]  [xscale=1,yscale=1]  {$...$};

		\end{tikzpicture}
		\caption{Constructing a closed surface starting from \(S\).}
		\label{figureclosingsurface}
	\end{figure}
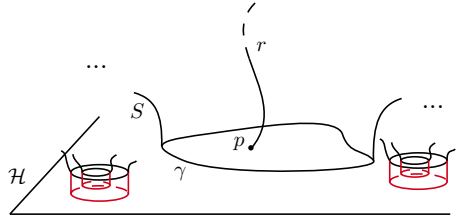
	
	Now, assume first by contradiction that \(\gamma\) is the only non-contractible loop in $\partial S$. Then, one can form a closed surface intersecting $\tilde{r}$ in exactly one point by attaching to \(S\), along every loop in $\partial S$, the interior of the loop in $\mathcal{H}$.

	When several loops are concentric, we shall attach to the outer loop a vertical cylinder in \(\R^3_-\), bounded by a planar cap at a certain negative height -- by doing this inductively, with all the loops contained in the interior of the original one, a choice of the negative heights is possible, so that the resulting surface doesn't have self-intersections (check Figure \ref{figureclosingsurface}).
	
	The resulting surface is a closed embedded connected surface $\Sigma$ with corners, still everything we said on the number of intersections with $\tilde{r}$ holds, since $\tilde{r}$ doesn't touch the corners of \(\Sigma\), and so it makes sense to speak of a transverse intersection. But then we get an absurd, since $\tilde{r}$ intersects $\Sigma$ in exactly one point -- that is, in the planar cap associated to $\gamma$.
	
	Then, there must be some other non-contractible loop. Assume by contradiction that the other non-contractible loops are $\gamma_1$,..., \(\gamma_{2k}\), with \(k\in\{1,2,...\}\). Up to changing the names, we can order them from the innermost to the outermost -- that is, $\gamma_{i}$ is contained in the interior of $\gamma_{i+1}$ for each \(i\). We treat the contractible loops as in the previous case. For the non-contractible loops, we attach the annular planar region between \(\gamma_{2l-1}\) and \(\gamma_{2l}\) to the two loops, for each \(l\) in \(\{1,...,k\}\). Regarding $\gamma$, we attach to it its interior on $\mathcal{H}$. As in the previous case, we get a contradiction. Indeed, the resulting surface is closed -- this comes from the fact that we assumed the non-contractible loops to be even -- and $\tilde{r}$ intersects it transversely in exactly one point -- i.e. in the interior of $\gamma$.
\end{proof}
In the proof of the main theorem, we use the theory of Alexandrov functions that we recalled in Section \ref{sectionpreliminaries}. The following lemma will be used to estimate when the Alexandrov procedure stops for a touching point on an end.
\begin{lemma}\label{lemmacircles}
	Let \(A\subset\R^3_+\) be a vertical annular end of a linear Weingarten surface, with \(a,b>0\). Given a vertical plane \(Q\) with normal $\nu$, define the functions $\alpha_1$ and $\alpha$ as in Section \ref{sectionpreliminaries}. Then, for each diverging sequence \(\{t_n\}_{n\in\N}\subset\R^+\), there exists a value \(T>0\) and  a subsequence \(\{t_{n_k}\}_{k\in\N}\) such that \begin{equation}\label{equationconvergencealpha}
		\alpha(t_{n_k}+T)\to d,
	\end{equation} where \(d\) is the distance between \(Q\) and the axis of \(A\).
\end{lemma}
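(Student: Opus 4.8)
\emph{Strategy.} The plan is to slide $A$ down along its axis, invoke the geometric convergence of the slide-back sequence to a W-Delaunay surface $D$, observe that for a vertical W-Delaunay surface the Alexandrov function is constantly equal to the distance of its axis from $Q$, and then argue that the Alexandrov function behaves continuously along this convergence. Since $\{t_n\}$ diverges, I would first pass to an increasing subsequence, so that $\{A-t_ne_3\}_n$ is a slide-back sequence for $A$ (whose axis vector is $e_3$). A vertical translation by $-t\,e_3$ carries the horizontal slice of $A$ at height $t$ onto the horizontal slice of $A-t\,e_3$ at height $0$ and preserves the $\nu$-coordinate of every point; since $\alpha_1$ and $\alpha$ depend only on horizontal slices and on $\nu$-distances, this gives the identity $\alpha(t_n)=\alpha^{A-t_ne_3}(0)$, where for a surface $B$ I write $\alpha^B,\alpha_1^B$ for the functions attached to $B$, $Q$, $\nu$ exactly as in Section \ref{sectionpreliminaries} --- these only require $L_p\cap B\neq\emptyset$, so it is irrelevant that $A-t_ne_3\not\subset\R^3_+$. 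By Theorem \ref{geometricconvergence} there is then a subsequence $\{t_{n_k}\}$ and a vertical W-Delaunay surface $D$ sharing the axis $\ell$ of $A$, such that $A_k\defeq A-t_{n_k}e_3$ converges to $D$ as normal graphs over compact subsets of $D$ in every $C^m$-topology. As $d=\mathrm{dist}(Q,\ell)$, it suffices to show $\alpha^{A_k}(0)\to\alpha^D(0)=d$.

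That $\alpha^D\equiv d$ is a direct computation. Since $D$ is a surface of revolution about the vertical line $\ell$, the slice $D\cap\{z=0\}$ is a circle $C_0$ of radius in $[r,R]$ centred at $\ell\cap\{z=0\}$. A line $L_p$ with $p\in Q$ at height $0$ is horizontal and orthogonal to $Q$, so $L_p\cap D=L_p\cap C_0$, which, when nonempty, is either the single tangency point of $C_0$ lying on the diameter parallel to $Q$ --- at $\nu$-distance exactly $d$ from $Q$ --- or a pair of transversal points symmetric about that diameter, whose midpoint is again at $\nu$-distance $d$. In either case $\alpha_1^D(p)=d$, and since some $L_p$ (e.g.\ the one through a point of $\ell$) does meet $D$, we get $\alpha^D(0)=d$.

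To compare $A_k$ with $D$ near $\{z=0\}$, note that a W-Delaunay surface, being a parallel surface of a classical Delaunay surface, has nowhere-vertical unit normal, so $\{z=0\}$ is uniformly transverse to $D$ along the compact set $K\defeq D\cap\{|z|\leq1\}$. For $k$ large this forces $A_k\cap\{|z|\leq1\}$ to be a single normal graph over $K$ converging to $K$ in every $C^m$; in particular $A_k$ is transverse to $\{z=0\}$ and $\Gamma_k\defeq A_k\cap\{z=0\}$ is a single smooth loop contained in an ever-thinner tubular neighbourhood of $C_0$. Being a $C^2$-small perturbation of a circle, $\Gamma_k$ has positive curvature, hence is convex, so each horizontal line orthogonal to $Q$ meets it in at most two points, and only for $p$ in a fixed bounded set. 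One then checks that for every such $p$ the midpoint (respectively, tangency point) defining $\alpha_1^{A_k}(p)$ lies within $o(1)$ of $d$ in the $\nu$-direction, uniformly in $p$, whence $\alpha^{A_k}(0)=\sup_p\alpha_1^{A_k}(p)=d+o(1)$, and finally $\alpha(t_{n_k})=\alpha^{A_k}(0)\to d$.

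The delicate point is this last comparison. One must be sure that the geometric convergence of Theorem \ref{geometricconvergence} genuinely produces a \emph{single} graphical sheet over $K$, so that $\Gamma_k$ acquires no extraneous components from other parts of $A_k$ --- this is precisely where the uniform curvature bound for linear Weingarten surfaces, built into that theorem, is essential. Granting that, the convexity of $\Gamma_k$ is used both to bound the number of intersections of a horizontal line with $\Gamma_k$ (otherwise the first two intersections could cluster far from the midpoint of $C_0$ and spoil the estimate) and to control the midpoint uniformly as $L_p$ degenerates to a tangent line of $\Gamma_k$; both are elementary once $\Gamma_k\to C_0$ as a convex curve, but together they are the heart of the argument.
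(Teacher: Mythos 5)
Your proposal is correct and follows essentially the same route as the paper: pass to a slide-back subsequence, invoke Theorem \ref{geometricconvergence} to get smooth convergence of $A_{n_k}\cap\mathcal{H}$ to the circle $D\cap\mathcal{H}$, observe that the Alexandrov function of a circle centred on the axis is identically $d$, and conclude by continuity. The only (cosmetic) difference is that the paper first translates $D$ vertically so that a bulge sits on $\mathcal{H}$, which gives the transversality of the slice for free, whereas you argue that the normal of a W-Delaunay is nowhere vertical; both devices serve the same purpose.
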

\begin{proof}
	Consider a diverging sequence \(\{t_n\}_{n\in\N}\subset\R^+\). Let \(W\) be the W-Delaunay with the same axis as \(A\) given by Theorem\,\,\ref{geometricconvergence}. We know that \(W\) has a sequence of alternating necks and bulges. Consider \(T\in\R\) such that \(W\cap\{z=T\}\) is a neck -- or, equivalently, a bulge -- and let \(K\) be a compact subset of \(W\) containing \(W\cap\{z=T\}\). By Theorem\,\,\ref{geometricconvergence}, there exists a subsequence \(\{t_{n_k}\}_{k\in\N}\subset\{t_n\}_{n\in\N}\) such that the slide-back sequence \(\{A_{n_k}\}_{k\in\N}\) converges smoothly to \(K\) as normal graphs. In particular, \(\{A_{n_k}\cap\{z=T\}\}_{k\in\N}\) is a sequence of simple planar loops converging smoothly to the circle \(W\cap\{z=T\}\) as normal graphs. Then, by the definition of $\alpha_1$ and $\alpha$, the subsequence \(\{t_{n_k}\}_{k\in\N}\) verifies\,\,\eqref{equationconvergencealpha}.
\end{proof}
Before stating the main theorem, we introduce the following terminology to simplify the exposition. Given an annular end \(E\) satisfying a linear Weingarten equation \(2aH+bK=1\), with \(a,b>0\), we know by Theorem \ref{geometricconvergence} that \(E\) converges geometrically to a W-Delaunay surface \(W\), with big and small radii \(R\) and \(r\) respectively. In the following, we will refer as the \textit{mass} of \(E\) to the quantity \(\mathcal{W}\defeq\pi(Rr+b)\).
\begin{theorem}\label{two}
	Let \(M\) be a complete, properly embedded surface satisfying an equation\,\,\(2aH+bK=1\), for \(a\), \(b\) positive constants, with boundary a strictly convex planar curve\,\,\(C\subset\mathcal{H}\). Let \(D\) be the interior of \(C\). Assume that \(M\) is transverse to \(\mathcal{H}\) along\,\,\(C\) and \(M\) is contained, say, in the upper halfspace near \(C\). Assume that \(M\) has a finite number \(n\geq0\) of vertical annular ends. We index with \(i\in I\) the positive ends and with \(j\in J\) the negative ones.
	
	If \(M\) is compact, then \(M\) is contained in the upper halfspace.
	
	If \(M\) is not compact, then either \(M\) is contained in the upper halfspace or there is a simple loop $\gamma\subset M\cap(\mathcal{H}\setminus\bar{D})$, such that $\gamma$ generates \(\Pi_1(\mathcal{H}\setminus\bar{D})\). In the latter case, it holds:
	\begin{itemize}
		\item \(|D|\leq\sum_{i\in I}\mathcal{W}_i-\sum_{j\in J}\mathcal{W}_j\) when \(\sum_{i\in I}\mathcal{W}_i-\sum_{j\in J}\mathcal{W}_j\geq0\).
		\item \(|D|\geq\sum_{j\in J}\mathcal{W}_j-\sum_{i\in I}\mathcal{W}_i\) when \(\sum_{i\in I}\mathcal{W}_i-\sum_{j\in J}\mathcal{W}_j<0\);
	\end{itemize}
	where, for each \(i\in I\cup J\), $\mathcal{W}_i$ is the mass of the \(i\)-th end.
\end{theorem}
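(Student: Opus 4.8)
The plan is to prove the topological dichotomy first, to derive the area inequalities from it by a flux computation, and to recover the compact case as the degenerate instance $I=J=\emptyset$.

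\emph{The dichotomy.} Suppose $M\not\subset\R^3_+$, so $M$ has a point with $z<0$. Let $\mathcal{U}$ be the connected component of $M\setminus\mathcal{H}$ containing the part of $M$ that lies in $\{z>0\}$ in a small collar of $C$, and let $\partial\mathcal{U}\subset M\cap\mathcal{H}$ be its frontier in $M$; note $C\subset\partial\mathcal{U}$. If $\partial\mathcal{U}$ were equal to $C$ then, since $C=\partial M$ is not an interior locus of $M\cap\mathcal{H}$, the closure $\overline{\mathcal{U}}$ would be open and closed in the connected surface $M$, forcing $M=\overline{\mathcal{U}}\subset\R^3_+$, a contradiction; hence $\partial\mathcal{U}$ contains a loop other than $C$. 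Now fix a point $p\in D$ encircled by no loop of $M\cap\mathcal{H}$ contained in $D$; truncate $\mathcal{U}$ at a large height $N$ (using that each end is a vertical annular end confined to a solid half-cylinder, Theorem \ref{saearp1}) and cap the resulting loops at $\{z=N\}$ by disks, obtaining a compact, connected, orientable surface $S$ with $\partial S=\partial\mathcal{U}\subset M\cap\mathcal{H}$ and $C\subset\partial S$; and choose a properly embedded curve $r$ from $p$ to infinity inside $\R^3_+$ disjoint from the compact set $S$. By Lemma \ref{lemmacountingloops} the number of loops of $\partial S$ that are non-contractible in $\mathcal{H}\setminus\{p\}$ is even. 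Among them, $C$ is non-contractible (as $p\in D$); a loop of $\partial S$ lying in $D$ is contractible in $\mathcal{H}\setminus\{p\}$ by the choice of $p$; and a loop of $\partial S$ exterior to $\bar{D}$ is non-contractible in $\mathcal{H}\setminus\{p\}$ precisely when it winds around $\bar{D}$. Hence an odd, in particular positive, number of loops of $\partial S$ lie in $\mathcal{H}\setminus\bar{D}$ and wind around $\bar{D}$; any such loop is a simple $\gamma\subset M\cap(\mathcal{H}\setminus\bar{D})$ generating $\Pi_1(\mathcal{H}\setminus\bar{D})$.

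\emph{The area inequalities.} Assume the second alternative and choose $\gamma$ innermost among all loops of $M\cap\mathcal{H}$ winding around $\bar{D}$, so that $C$ and $\gamma$ cobound a planar annulus $\mathcal{A}\subset\mathcal{H}\setminus D$ and $\gamma$ bounds the planar disk $\bar{D}\cup\mathcal{A}$ of area $|D|+|\mathcal{A}|$. Far out on each end $E_k$, the geometric convergence to a vertical W-Delaunay surface (Theorem \ref{geometricconvergence}), together with the control of the Alexandrov function provided by Lemma \ref{lemmacircles} and Lemma \ref{lemmaalphafunction1}, lets me pick a simple, nearly circular loop $\Gamma_k$ bounding a nearly planar disk $K_k$, the $\Gamma_k$'s mutually unlinked. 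Cutting $M$ along $\gamma$ and along the $\Gamma_k$'s and taking the compact piece $\Omega$ adjacent to $C$, I glue to $\Omega$ the caps $K_k$ and the planar cap $\bar{D}\cup\mathcal{A}$ filling $C\cup\gamma$ (oriented as forced by the mean curvature vector of $M$ along $C$), obtaining a piecewise-smooth cycle $\Sigma$ bounding a solid region; this is precisely where $\gamma$ is used, since capping $C$ directly by $\bar{D}$ would destroy embeddedness once $M$ dips below $\mathcal{H}$. Applying the flux formula for $2aH+bK=1$ in the direction $e_3$ to $\Sigma$ — whose boundary integrand is $\frac{1}{2}\langle e_3,(2a+bT)\nu\rangle$ — and evaluating by Lemma \ref{lemmamassadelaunay} the contribution of each positive end as $+\mathcal{W}_i$ and of each negative end as $-\mathcal{W}_j$, I obtain an identity of the form $\varepsilon\,|D| + (\text{a sign-definite term controlled by }|\mathcal{A}|) = \sum_{i\in I}\mathcal{W}_i - \sum_{j\in J}\mathcal{W}_j$ with $\varepsilon=\pm1$. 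Reading off $\varepsilon$ — which the geometry forces to be $-1$ exactly when $\sum_{i\in I}\mathcal{W}_i-\sum_{j\in J}\mathcal{W}_j\geq0$, and $+1$ otherwise — and dropping the auxiliary term then gives the two displayed inequalities.

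\emph{The compact case and the main obstacle.} If $M$ is compact then $I=J=\emptyset$, so $\sum_{i\in I}\mathcal{W}_i-\sum_{j\in J}\mathcal{W}_j=0$; were $M$ not contained in $\R^3_+$, the first inequality would give $|D|\leq0$, which is absurd, so $M\subset\R^3_+$. I expect the difficulties to be twofold. First, the topological step must be made rigorous in the presence of nested loops of $M\cap\mathcal{H}$, of a possibly non-transverse intersection of $M$ with $\mathcal{H}$ along loops other than $C$, and of the a priori unknown side of $\mathcal{H}$ over which $M$ leaves $C$; the clean count above depends on arranging $p$, the truncation, and the caps so that $\partial S=\partial\mathcal{U}$ and Lemma \ref{lemmacountingloops} applies verbatim. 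Second, and more delicate, the flux computation needs a consistent choice of orientations for all caps relative to the mean curvature vector of $M$: it is exactly the sign $\varepsilon$ with which $|D|$ enters the flux balance — forced in opposite ways according to the sign of $\sum_{i\in I}\mathcal{W}_i-\sum_{j\in J}\mathcal{W}_j$ — that produces the asymmetric two-case conclusion, and pinning it down is the crux of the proof.
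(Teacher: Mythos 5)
Your outline reverses the logical structure of the argument and, in doing so, hides the two places where the real work happens. First, the dichotomy is not purely topological, and your derivation of it breaks down. You apply Lemma \ref{lemmacountingloops} to $S=$ (truncated $\mathcal{U}$) with a ray $r$ from $p\in D$ to infinity ``disjoint from the compact set $S$'' --- but such a proper ray need not exist: if $M_+$ arches over the part of $D$ not enclosed by the loops of $M\cap D$, every admissible $p$ lies in a \emph{bounded} component of $\R^3_+\setminus S$ and no proper curve from $p$ avoids $S$. Deciding which points of $\mathcal{H}$ are accessible from infinity is exactly the delicate point, and it is settled in the paper by tracking the direction of the mean curvature vector along outer loops of $\partial D_0$ and by a flux computation (Step 2) that uses the sign of $\sum_i\mathcal{W}_i-\sum_j\mathcal{W}_j$. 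Indeed your conclusion --- that $\partial\mathcal{U}=\partial M_+$ must contain a loop winding around $\bar D$ whenever $M\not\subset\R^3_+$ --- is false in the negative-balance case: there the paper shows $M_+\cap(\mathcal{H}\setminus\bar D)=\emptyset$ and $M_+$ compact, and the loop $\gamma$ is produced on a \emph{different} component of $M\cap\mathcal{H}$ (the set $\mathcal{T}_{\mathrm{ext}}$ attached to a component of $M\setminus M_+$). Since your parity count applied to that configuration would yield a contradiction with a configuration the theorem permits, the count cannot be valid as stated.

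Second, the sign $\varepsilon$ with which $|D|$ enters the flux balance is asserted (``the geometry forces $\varepsilon=-1$ exactly when the balance is nonnegative'') rather than proved, and this is the crux you have skipped. Determining $\varepsilon$ amounts to determining the direction of $\vec H$ along $C$, equivalently proving that $\gamma$ is the \emph{unique} non-contractible loop of $M\cap(\mathcal{H}\setminus\bar D)$, so that near $C$ the surface lies over the exterior of $C$. In the paper this is Step 3: an Alexandrov reflection argument run simultaneously on $\beta$, on the sections $M\cap\mathcal{H}_{\pm T}$, and on the ends, using the monotonicity of the Alexandrov function (Lemma \ref{lemmaalphafunction1}), the asymptotic circularity of the sections (Lemma \ref{lemmacircles}, Theorem \ref{geometricconvergence}), and a separate barrier argument ($C^*$) for ends admitting a plane of symmetry in the sweeping direction. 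None of this appears in your proposal, and without it the flux identity only gives $\pm|D|+\dots=\sum_i\mathcal{W}_i-\sum_j\mathcal{W}_j$ with an undetermined sign, from which the stated inequalities do not follow. (Your capping of $C\cup\gamma$ by $\bar D\cup\mathcal{A}$ also needs repair --- $\mathcal{A}$ and $D$ may meet further loops of $M\cap\mathcal{H}$, which is why the paper builds the region $D_0$ and does the $\epsilon$-shifted cutting and pasting --- but that is a secondary issue compared with the two above.)
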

\begin{proof}
	We first tackle the case in which the following balance holds:
		\begin{equation}\label{equationbalance}
		\sum_{i\in I}\mathcal{W}_i-\sum_{j\in J}\mathcal{W}_j\geq0.
		\end{equation}
		The case of negative balance will be treated at the end, using some of the machinery introduced in the present case. Observe that the compact case is included in \eqref{equationbalance}.
		
	We follow the steps of the proof of \cite[Theorem 2]{saearpconvex}. Nevertheless, the presence of negative ends introduces certain technical difficulties, which do not permit to immediately translate what is done there.
	
	The compact case is explained in \cite{britosaearpmeeks}, \cite{saearpweingarten}. It can also be inferred from the following proof by simply neglecting the ends. Namely, one can ignore Step\,\,1 below, and pass directly to Step\,\,2, putting to zero the contribution of the ends to flux formula \eqref{balancing1}. Regarding the application of Alexandrov reflection principle (Step 3), one can apply it directly, with no need to use Lemma \ref{lemmaalphafunction1}. If \(M\) is not contained in \(\R^3_+\), the inequality in the thesis is then reached with a zero in the right-hand side (there are no ends), thereby leading to a contradiction.
	
	Let us now pass to the general case. Up to a small vertical translation, by Sard's Theorem, we may assume that \(M\) is transverse to the plane \(\mathcal{H}\). Let \(M_+\) be the component of \(M\cap\R^3_+\) containing \(C\).
	
	\textbf{Step 1:} \textit{If \(M\) is not contained in \(\R^3_+\) and \(M_+\cap(\mathcal{H}\setminus\bar{D})=\emptyset\), then \(M_+\) is compact.}
	\\Assume by contradiction that this is not the case. If \(M\) is not contained in\,\,\(\R^3_+\) and \(M_+\cap (\mathcal{H}\setminus\bar{D})=\emptyset\), it must hold that  and \(M\cap D\neq\emptyset\).
	
	This implies that \(\partial M_+\) is made up of \(C\) and a finite number of disjoint simple loops contained in \(D\). Now, we attach to \(M_+\) a proper subdomain \(D_0\) of \(D\), that is defined inductively as follows (refer to Figure \ref{figureconstructionD0}). Consider $\partial M_+\setminus C$. It is a finite union of disjoint simple loops in \(D\). Focus on the outer loops -- that is, the ones that may be connected to \(C\) via a path that doesn't cross other loops of \(\partial M_+\setminus C\). Define $B_1$ as the union of the interiors of the outer loops and \(A_1\) as:
	\[A_1\defeq D\setminus \bar{B}_1\]
	
	We now start the induction. Focus on the loops in $B_1$. We call outer loops the ones that may be connected to $\partial B_1$ via a path in $B_1$ that doesn't cross other loops. Define $\tilde{B}_2$ as the union of the interiors of those outer loops.
	
	Now, in $\tilde{B}_2$, define the outer loops analogously, and let \(B_2\) be the union of their interiors. Define:
	\[A_2\defeq \tilde{B}_2\setminus\bar{B}_2.\]
	
	To define \(A_k\), we do the same procedure starting from \(B_{k-1}\). When either $B_{k-1}$ or \(\tilde{B}_k\) don't contain any loop in their interior, we set \(\tilde{B}_k\) or $B_k$ equal to $\emptyset$ and stop the induction. Define then \(D_0\) as the (finite) union of the \(A_i\)'s.
	
	\begin{figure}[!htb]
		\centering

\tikzset{
	pattern size/.store in=\mcSize, 
	pattern size = 5pt,
	pattern thickness/.store in=\mcThickness, 
	pattern thickness = 0.3pt,
	pattern radius/.store in=\mcRadius, 
	pattern radius = 1pt}
\makeatletter
\pgfutil@ifundefined{pgf@pattern@name@_t43tg3toy}{
	\pgfdeclarepatternformonly[\mcThickness,\mcSize]{_t43tg3toy}
	{\pgfqpoint{0pt}{0pt}}
	{\pgfpoint{\mcSize+\mcThickness}{\mcSize+\mcThickness}}
	{\pgfpoint{\mcSize}{\mcSize}}
	{
		\pgfsetcolor{\tikz@pattern@color}
		\pgfsetlinewidth{\mcThickness}
		\pgfpathmoveto{\pgfqpoint{0pt}{0pt}}
		\pgfpathlineto{\pgfpoint{\mcSize+\mcThickness}{\mcSize+\mcThickness}}
		\pgfusepath{stroke}
}}
\makeatother


\tikzset{
	pattern size/.store in=\mcSize, 
	pattern size = 5pt,
	pattern thickness/.store in=\mcThickness, 
	pattern thickness = 0.3pt,
	pattern radius/.store in=\mcRadius, 
	pattern radius = 1pt}\makeatletter
\pgfutil@ifundefined{pgf@pattern@name@_gspqjockl}{
	\pgfdeclarepatternformonly[\mcThickness,\mcSize]{_gspqjockl}
	{\pgfqpoint{-\mcThickness}{-\mcThickness}}
	{\pgfpoint{\mcSize}{\mcSize}}
	{\pgfpoint{\mcSize}{\mcSize}}
	{\pgfsetcolor{\tikz@pattern@color}
		\pgfsetlinewidth{\mcThickness}
		\pgfpathmoveto{\pgfpointorigin}
		\pgfpathlineto{\pgfpoint{\mcSize}{0}}
		\pgfpathmoveto{\pgfpointorigin}
		\pgfpathlineto{\pgfpoint{0}{\mcSize}}
		\pgfusepath{stroke}}}
\makeatother


\tikzset{
	pattern size/.store in=\mcSize, 
	pattern size = 5pt,
	pattern thickness/.store in=\mcThickness, 
	pattern thickness = 0.3pt,
	pattern radius/.store in=\mcRadius, 
	pattern radius = 1pt}\makeatletter
\pgfutil@ifundefined{pgf@pattern@name@_qzg8sxqrg}{
	\pgfdeclarepatternformonly[\mcThickness,\mcSize]{_qzg8sxqrg}
	{\pgfqpoint{-\mcThickness}{-\mcThickness}}
	{\pgfpoint{\mcSize}{\mcSize}}
	{\pgfpoint{\mcSize}{\mcSize}}
	{\pgfsetcolor{\tikz@pattern@color}
		\pgfsetlinewidth{\mcThickness}
		\pgfpathmoveto{\pgfpointorigin}
		\pgfpathlineto{\pgfpoint{\mcSize}{0}}
		\pgfpathmoveto{\pgfpointorigin}
		\pgfpathlineto{\pgfpoint{0}{\mcSize}}
		\pgfusepath{stroke}}}
\makeatother


\tikzset{
	pattern size/.store in=\mcSize, 
	pattern size = 5pt,
	pattern thickness/.store in=\mcThickness, 
	pattern thickness = 0.3pt,
	pattern radius/.store in=\mcRadius, 
	pattern radius = 1pt}\makeatletter
\pgfutil@ifundefined{pgf@pattern@name@_bq34wcjhq}{
	\pgfdeclarepatternformonly[\mcThickness,\mcSize]{_bq34wcjhq}
	{\pgfqpoint{-\mcThickness}{-\mcThickness}}
	{\pgfpoint{\mcSize}{\mcSize}}
	{\pgfpoint{\mcSize}{\mcSize}}
	{\pgfsetcolor{\tikz@pattern@color}
		\pgfsetlinewidth{\mcThickness}
		\pgfpathmoveto{\pgfpointorigin}
		\pgfpathlineto{\pgfpoint{\mcSize}{0}}
		\pgfpathmoveto{\pgfpointorigin}
		\pgfpathlineto{\pgfpoint{0}{\mcSize}}
		\pgfusepath{stroke}}}
\makeatother


\tikzset{
	pattern size/.store in=\mcSize, 
	pattern size = 5pt,
	pattern thickness/.store in=\mcThickness, 
	pattern thickness = 0.3pt,
	pattern radius/.store in=\mcRadius, 
	pattern radius = 1pt}
\makeatletter
\pgfutil@ifundefined{pgf@pattern@name@_tnhj0vwuc}{
	\pgfdeclarepatternformonly[\mcThickness,\mcSize]{_tnhj0vwuc}
	{\pgfqpoint{0pt}{0pt}}
	{\pgfpoint{\mcSize+\mcThickness}{\mcSize+\mcThickness}}
	{\pgfpoint{\mcSize}{\mcSize}}
	{
		\pgfsetcolor{\tikz@pattern@color}
		\pgfsetlinewidth{\mcThickness}
		\pgfpathmoveto{\pgfqpoint{0pt}{0pt}}
		\pgfpathlineto{\pgfpoint{\mcSize+\mcThickness}{\mcSize+\mcThickness}}
		\pgfusepath{stroke}
}}
\makeatother


\tikzset{
	pattern size/.store in=\mcSize, 
	pattern size = 5pt,
	pattern thickness/.store in=\mcThickness, 
	pattern thickness = 0.3pt,
	pattern radius/.store in=\mcRadius, 
	pattern radius = 1pt}\makeatletter
\pgfutil@ifundefined{pgf@pattern@name@_gv0f2tvlu}{
	\pgfdeclarepatternformonly[\mcThickness,\mcSize]{_gv0f2tvlu}
	{\pgfqpoint{-\mcThickness}{-\mcThickness}}
	{\pgfpoint{\mcSize}{\mcSize}}
	{\pgfpoint{\mcSize}{\mcSize}}
	{\pgfsetcolor{\tikz@pattern@color}
		\pgfsetlinewidth{\mcThickness}
		\pgfpathmoveto{\pgfpointorigin}
		\pgfpathlineto{\pgfpoint{\mcSize}{0}}
		\pgfpathmoveto{\pgfpointorigin}
		\pgfpathlineto{\pgfpoint{0}{\mcSize}}
		\pgfusepath{stroke}}}
\makeatother
\tikzset{every picture/.style={line width=0.75pt}} 

\begin{tikzpicture}[x=0.75pt,y=0.75pt,yscale=-0.7,xscale=0.7]
	
	\draw  [pattern=_t43tg3toy,pattern size=6pt,pattern thickness=0.75pt,pattern radius=0pt, pattern color={rgb, 255:red, 0; green, 0; blue, 0}] (169,113.8) .. controls (169,72.82) and (239.52,39.6) .. (326.5,39.6) .. controls (413.48,39.6) and (484,72.82) .. (484,113.8) .. controls (484,154.78) and (413.48,188) .. (326.5,188) .. controls (239.52,188) and (169,154.78) .. (169,113.8) -- cycle ;
	\draw  [fill={rgb, 255:red, 255; green, 255; blue, 255 }  ,fill opacity=1 ] (215.87,69.11) .. controls (239.69,58.28) and (346.89,47.44) .. (323.07,69.11) .. controls (299.25,90.78) and (299.25,101.62) .. (323.07,134.12) .. controls (346.89,166.63) and (239.69,166.63) .. (215.87,134.12) .. controls (192.04,101.62) and (192.04,79.95) .. (215.87,69.11) -- cycle ;
	\draw  [fill={rgb, 255:red, 255; green, 255; blue, 255 }  ,fill opacity=1 ] (450.11,110.61) .. controls (474.25,124.2) and (436.55,155.86) .. (408.79,144.4) .. controls (381.03,132.95) and (402.94,118.51) .. (378.72,107.64) .. controls (354.5,96.76) and (395.82,62.97) .. (420.04,73.84) .. controls (444.26,84.71) and (425.97,97.02) .. (450.11,110.61) -- cycle ;
	\draw  [pattern=_gspqjockl,pattern size=2.7750000000000004pt,pattern thickness=0.75pt,pattern radius=0pt, pattern color={rgb, 255:red, 0; green, 0; blue, 0}] (214.1,82.74) .. controls (226.23,76.56) and (251.7,56.55) .. (268.68,82.74) .. controls (285.66,108.92) and (273.53,115.22) .. (268.68,119.79) .. controls (263.82,124.36) and (226.23,138.31) .. (214.1,119.79) .. controls (201.97,101.26) and (201.97,88.91) .. (214.1,82.74) -- cycle ;
	\draw  [pattern=_qzg8sxqrg,pattern size=2.7750000000000004pt,pattern thickness=0.75pt,pattern radius=0pt, pattern color={rgb, 255:red, 0; green, 0; blue, 0}] (271.21,129.12) .. controls (280.17,126.05) and (298.99,116.13) .. (311.54,129.12) .. controls (324.08,142.11) and (315.12,145.23) .. (311.54,147.5) .. controls (307.95,149.77) and (280.17,156.69) .. (271.21,147.5) .. controls (262.25,138.31) and (262.25,132.18) .. (271.21,129.12) -- cycle ;
	\draw  [fill={rgb, 255:red, 255; green, 255; blue, 255 }  ,fill opacity=1 ] (258,83.6) .. controls (263,90.6) and (272.46,103.65) .. (260,111.6) .. controls (247.54,119.55) and (249.5,106.75) .. (233.75,111.67) .. controls (218,116.6) and (212,105.6) .. (222,89.6) .. controls (232,73.6) and (253,76.6) .. (258,83.6) -- cycle ;
	\draw  [pattern=_bq34wcjhq,pattern size=2.7750000000000004pt,pattern thickness=0.75pt,pattern radius=0pt, pattern color={rgb, 255:red, 0; green, 0; blue, 0}] (400.47,94.38) .. controls (389.61,81.83) and (417.61,75.8) .. (426.19,94.38) .. controls (434.76,112.97) and (420.47,109.7) .. (426.19,125.01) .. controls (431.9,140.33) and (406.18,140.33) .. (400.47,125.01) .. controls (394.75,109.7) and (411.33,106.94) .. (400.47,94.38) -- cycle ;
	\draw  [fill=black!25, draw=black] (229,96.8) .. controls (229,92.82) and (234.6,89.6) .. (241.5,89.6) .. controls (248.4,89.6) and (254,92.82) .. (254,96.8) .. controls (254,100.78) and (248.4,104) .. (241.5,104) .. controls (234.6,104) and (229,100.78) .. (229,96.8) -- cycle ;
	\draw  [color={rgb, 255:red, 0; green, 0; blue, 0 }  ,draw opacity=1 ][pattern=_tnhj0vwuc,pattern size=6pt,pattern thickness=0.75pt,pattern radius=0pt, pattern color={rgb, 255:red, 0; green, 0; blue, 0}] (473,12.6) -- (508,12.6) -- (508,25) -- (473,25) -- cycle ;
	\draw  [color={rgb, 255:red, 0; green, 0; blue, 0 }  ,draw opacity=1 ][pattern=_gv0f2tvlu,pattern size=2.7750000000000004pt,pattern thickness=0.75pt,pattern radius=0pt, pattern color={rgb, 255:red, 0; green, 0; blue, 0}] (473,34.6) -- (508,34.6) -- (508,47) -- (473,47) -- cycle ;
	\draw  [fill=black!25, draw=black] (473,56.6) -- (508,56.6) -- (508,69) -- (473,69) -- cycle ;
			\draw (510,6.4) node [anchor=north west][inner sep=0.75pt]  [xscale=1,yscale=1]  {$A_{1}$};
			\draw (510,28.4) node [anchor=north west][inner sep=0.75pt]  [xscale=1,yscale=1]  {$A_{2}$};
			\draw (510,50.4) node [anchor=north west][inner sep=0.75pt]  [xscale=1,yscale=1]  {$A_{3}$};
			\draw (180,156.4) node [anchor=north west][inner sep=0.75pt]  [xscale=1,yscale=1]  {$C$};

		\end{tikzpicture}
		\caption{The construction of \(D_0\).}
		\label{figureconstructionD0}
	\end{figure}
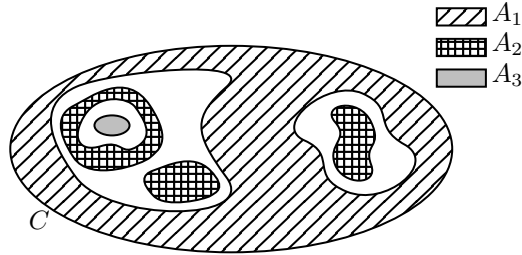
	Now, \(M_+\cup D_0\) is a two-dimensional properly embedded manifold (with corners), which, therefore, separates \(\R^3\) in two connected components. We refer as the \textit{interior} to the one where the mean curvature of \(M_+\) points. Observe that this two-dimensional manifold may be, in principle, unbounded.
	
	Consider \(M\setminus M_+\). It is a non-compact properly embedded complete surface, with non-empty boundary. Its boundary is contained in \(\partial D_0\) and, near the boundary, \(M\setminus M_+\) is contained in \(\R^3_-\). Observe that \(M\setminus M_+\) is, in general, not connected.
	
	We now shift the focus on the set of their boundaries -- that is, on \(\partial D_0\). It is a finite union of disjoint simple loops in \(D\). As above, we call \textit{outer} loop a loop which may be joined to \(C\) via a continuous path in \(D\) which doesn't cross other loops -- there must be some. Observe that, on an outer loop, the mean curvature vector of \(M\) points toward the exterior of the loop. Let $\sigma$ be an outer loop and let \(M_1\) be the connected component of \(M\setminus M_+\) that contains $\sigma$. First, we do some cutting and pasting on \(M_1\) to get a surface that separates \(\R^3\). First, define \(D_1\) as the union of the interiors of the loops which bound \(M_1\), and focus on the loops in $\partial M_1\cup(M_1\cap D_1)$.
	
	For every loop \(\delta\) in \(\partial M_1\), for $\epsilon>0$ sufficiently small, there is a family of loops $\delta^-(\epsilon)$ in \(M_1\cap\{z=-\epsilon\}\) which converge to $\delta$ as $\epsilon$ goes to \(0\). Attach a planar cap to \(M_1\) along the loop \(\delta^-(\epsilon)\) and neglect the annular region of \(M_1\) bounded by $\delta$ and \(\delta^-(\epsilon)\). When several loops of $\partial M_1$ are concentric, a choice of the $\epsilon$'s is possible so that the resulting surface does not have self-intersections.
	
	For a loop $\tilde{\delta}$ in \(M_1\cap D_1\), for $\epsilon>0$ sufficiently small, there is a family of loops $\tilde{\delta}^-(\epsilon)$, \(\tilde{\delta}^+(\epsilon)\) in \(M_1\cap\{z=-\epsilon\}\), \(M_1\cap\{z=\epsilon\}\) respectively which converge to $\sigma$.  Attach two planar caps to \(M_1\) along \(\tilde{\delta}^+(\epsilon)\), \(\tilde{\delta}^-(\epsilon)\) and neglect the annular region of \(M_1\) bounded between \(\tilde{\delta}^+(\epsilon)\) and \(\tilde{\delta}^-(\epsilon)\).
	
	What we get in the end is a properly embedded surface with corners, which may fail to be connected. We focus on the component containing the loop $\sigma$, call it $\tilde{M}_1$. Alexander's duality implies that $\tilde{M}_1$ separates \(\R^3\) in two connected components. 
	
	Now, when \(\tilde{M}_1\) is compact, it is clear that the mean curvature vector must point toward the bounded component of \(\R^3\setminus\tilde{M}_1\). On the other hand, when it is not compact, we know that, on the annular ends, it must point (approximately) toward the axis of rotation -- this comes from the geometric convergence of Theorem \ref{geometricconvergence}.
	
	In any case, since $\sigma$ is outer, the mean curvature vector along $\sigma$ points toward the exterior of $\sigma$, thus \(\tilde{M}_1\cap\mathcal{H}\) must contain an odd number of loops which are non-contractible in $\mathcal{H}\setminus\sigma$. Call $\mathcal{T}_{\text{in}}$ the set of those non-contractible loops which are in \(D\) and $\mathcal{T}_{\text{ext}}$ the one of those contained in $\mathcal{H}\setminus\bar{D}$. We now get a contradiction since, counting the cardinality of $\mathcal{T}_{\text{in}}\cup\mathcal{T}_{\text{ext}}$ in another way, we get that the same number must be even. First of all observe that, since $\sigma$ is outer, we have that the loops in $\mathcal{T}_{\text{in}}$ are contained in \(D_0\).
	
	Indeed, shift the focus on the upper halfspace \(\R^3_+\), and consider the components of \(\tilde{M}_1\cap\R^3_+\). Now, we assumed \(M\) to be transverse to $\mathcal{H}$. Then, the set $\mathcal{T}_{\text{in}}\cup\mathcal{T}_{\text{ext}}$ coincides with the set of the loops of \(\partial(\tilde{M}_1\cap\R^3_+)\) which are non-contractible in $\mathcal{H}\setminus\sigma$. Observe that each connected component of \(\tilde{M}_1\cap\R^3_+\) is contained either in the interior of \(M_+\cup D_0\) or in its exterior. Indeed, components which have a boundary component in \(D_0\) are, close to that boundary component, in the interior of \(M_+\cup D_0\). Moreover, to exit this region, they should pass through \(M_+\), thereby leading to a contradiction by the embeddedness of \(M\). The analogous reasoning holds for components of \(\tilde{M}_1\cap\R^3_+\) which have a boundary component in $\mathcal{H}\setminus\bar{D}$. 
	
	Consider first the components of \(\tilde{M}_1\cap\R^3_+\) in the interior of \(M_+\cup D_0\) -- if any. Since the annular ends cannot be nested, all these components are compact. We aim to use Lemma \ref{lemmacountingloops} to count the cardinality of the components of their boundaries which are non-contractible in $\mathcal{H}\setminus\sigma$ -- that is, to count the cardinality of $\mathcal{T}_{\text{in}}$.
	
	Since $\sigma$ is an outer loop, there exists a point \(p\) in the interior of \(\sigma\), which lies in the exterior of the surface \(M_+\cup D_0\). By cylindrical boundedness, one can then find a proper curve \(r\) homeomorphic to \([0,1)\), contained in the exterior of \(M_+\cup D_0\) and having its endpoint on \(p\). We can then apply Lemma \ref{lemmacountingloops} and get that the cardinality of $\mathcal{T}_{\text{in}}$ is even (when it is not zero).
	
	We now move on to counting the loops in $\mathcal{T}_{\text{ext}}$. Since the cardinality of $\mathcal{T}_{\text{int}}$ is either even or zero, we have that $\mathcal{T}_{\text{ext}}\neq\emptyset$. Consider then the components of \(\tilde{M}_1\cap\R^3_+\) which are in the exterior of \(M_+\cup D_0\).
	
	Remember that we assumed by contradiction \(M_+\) to be non-compact. Then, one can find a properly embedded curve \(r_1\subset M_+\subset\R^3\), homeomorphic to \([0,1)\) and with its endpoint on \(C\). We aim to apply again Lemma \ref{lemmacountingloops}. Now, the components of \(\tilde{M}_1\cap\R^3_+\) need not to be compact. Nevertheless, one can attach a planar cap to each annular end, at a certain height, and neglect the part of the end which sits above the cap. Then, we can apply Lemma \ref{lemmacountingloops} and reach a contradiction, since the number of non-contractible loops -- i.e. the cardinality of $\mathcal{T}_{\text{ext}}$ -- would turn out to be even. Thus, \(M_+\) must be compact.
	
	\textbf{Step 2:} \textit{If \(M\) is not contained in \(\R^3_+\), then \(M_+\) must intersect \(\mathcal{H}\setminus\bar{D}\).}
	\\Assume by contradiction that this is not the case. By the previous step, we know that \(M_+\) is compact. Then, \(M_+\cup D_0\) is a compact cycle, for which the following balancing formula holds \cite{saearpweingarten}:
	\begin{equation}\label{equationflux}
		\int_{D_0}\pe{Y,n_0}=\frac{1}{2}\int_{\partial D_0}\pe{Y,(2a+bT)\nu_0},
	\end{equation}
	where \(Y\) is any constant vector field in \(\R^3\), $\nu_0$ is the co-normal to $\partial D_0$ pointing toward \(M_+\) and \(n_0\) is the unit normal vector field on \(D_0\) which makes \(M_+\cup D_0\) an oriented cycle -- observe that, as an orientation for \(M_+\), we are picking the one determined by its mean curvature vector. We apply \eqref{equationflux} for \(Y=e_3\).
	
	Since, in our assumptions, \(M_+\) does not intersect \(\mathcal{H}\setminus\bar{D}\), we have that \(n_0=e_3\). Now, the operator \(2aI+bT\) is elliptic on \(M\) \cite{saearpweingarten}, therefore its eigenvalues are positive. Thus, we have \(\pe{\nu_0,(2a+bT)\nu_0}>0\) on $\partial D_0$. Since $\nu_0$, by construction, points upwards, this implies that \(\pe{e_3,(2a+bT)\nu_0}>0\) on \(\partial D_0\), so that, by \eqref{equationflux}, we get:
	\begin{equation}\label{equationflux1}
		\int_{C}\pe{e_3,n_0}<\frac{1}{2}\int_{\partial D_0}\pe{e_3,(2a+bT)\nu_0}.
	\end{equation}
	Now, we aim to apply flux formula to a different compact cycle. Start from \(M\cup D\) which is, in general, unbounded. For each annular end \(A_i\), \(i\in I\cup J\), we consider a planar cap \(D_i\) having its boundary on a simple loop \(\partial D_i\) of \(A_i\) -- this is possible by the aforementioned geometric convergence of every annular end to a W-Delaunay surface. We attach this planar cap to \(M\) and we neglect the part of \(A_i\) that sits above \(\partial D_i\) -- or below, in the case of negative ends. What we get is a compact cycle, for which the balancing formula, with \(Y=e_3\), reads as:
	
	\begin{equation}\label{equationflux2}
		\int_{D_0}\pe{e_3,n_0}+\sum_{i\in I\cup J}\int_{D_i}\pe{e_3,n_{i}}=\frac{1}{2}\int_{\partial D_0}\pe{e_3,(2a+bT)\nu_0}+\frac{1}{2}\sum_{i\in I\cup J}\int_{\partial D_i}\pe{e_3,(2a+bT)\nu_i},
	\end{equation}
	where $\nu_i$ is the co-normal to $\partial D_i$ pointing downwards for \(i\in I\) and upwards for \(i\in J\). Moreover, the vectors \(n_i\) are chosen so that, with the orientation of \(M\) given by the mean curvature vector, the compact cycle obtained is oriented. We will apply it for \(Y\) being equal to \(e_3=(0,0,1)\). Observe that \(n_i\) is equal to \(e_3\) for \(i\in I\), it is equal to \(-e_3\) for \(i\in J\), and \(n_0=e_3\), where the last thing comes from the fact that, by our assumption, we have \(M\cap (\mathcal{H}\setminus\bar{D})=\emptyset\). Rearranging the terms, we get:
	
	\begin{equation}\label{balancing1}
		\frac{1}{2}\int_{\partial D_0}\pe{e_3,(2a+bT)\nu_0}=|D_0|+\sum_{i\in I\cup J}\left[\int_{D_i}\pe{e_3,n_{i}}-\frac{1}{2}\int_{\partial D_i}\pe{e_3,(2a+bT)\nu_i}\right]	.	
	\end{equation}
	Regarding the terms appearing in the sum, observe, as in the proof of Lemma \ref{lemmamassadelaunay}, that the quantity
	\begin{equation}\label{equationquantity}
		\int_{D_i}\pe{e_3,n_{i}}-\frac{1}{2}\int_{\partial D_i}\pe{e_3,(2a+bT)\nu_i}
	\end{equation}
	is homotopically invariant. That is, for each non-trivial loop $\Gamma$ homotopic to\,$\partial D_i$ and each compact surface \(K\) with $\partial K\equiv \Gamma$, it holds:
	\[\int_{D_i}\pe{e_3,n_{i}}-\frac{1}{2}\int_{\partial D_i}\pe{e_3,(2a+bT)\nu_i}=\int_{K}\pe{e_3,n_{K}}-\frac{1}{2}\int_{\Gamma}\pe{e_3,(2a+bT)\nu_\Gamma},\]
	where \(n_K\) is the normal to \(K\) and $\nu_\Gamma$ is the conormal to $\Gamma$, pointing accordingly to\,\(n_i\),\,\(\nu_i\) respectively. Now, consider a positive end. By Theorem \ref{geometricconvergence}, we have that such an annular end converges geometrically to a W-Delaunay. Thus, by choosing $\Gamma$ higher and higher, we get that \eqref{equationquantity} must equal the analogous quantity computed on the W-Delaunay surface that the end converges to. Lemma \ref{lemmamassadelaunay} then implies that it is equal to \(\pi(R_ir_i+b)\), where \(R_i\) and \(r_i\) are the big and small radii of the W-Delaunay to which the end converge. For a negative end, we get a negative sign since the orientation of \(n_i\) and \(\nu_i\) is the opposite.
	Thus, \eqref{balancing1} becomes:
	\begin{equation}\label{equationflux3}
		\frac{1}{2}\int_{C}\pe{e_3,(2a+bT)\nu_0}=|D|+\sum_{i\in I}\mathcal{W}_i-\sum_{j\in J}\mathcal{W}_j,
	\end{equation}
	By comparing \eqref{equationflux1} with \eqref{equationflux3} we now get a contradiction:
	\[0<|D|-|D_0|\leq\sum_{j\in J}\mathcal{W}_j-\sum_{i\in I}\mathcal{W}_i,\]
	since, by \eqref{equationbalance}, we have that the term on the right-hand side is non-positive.
	Thus, when \(M\) is not contained in \(\R^3_+\), we get that \(M_+\) must intersect $\mathcal{H}\setminus\bar{D}$.

\textbf{Step 3:} \textit{When \(M\) is not contained in \(\R^3_+\), there exists a unique non-contractible loop $\gamma$ in \(\bar{M}^+\cap(\mathcal{H}\setminus\bar{D})\). In particular, the mean curvature vector of \(M\), near \(C\), must point toward the exterior of \(C\).}
\\The fact that the existence and uniqueness of $\gamma$ implies a restriction on the direction of the mean curvature vector of \(M\) near \(C\) is immediate. The existence and uniqueness of $\gamma$ is proved by Alexandrov reflection principle. To start the method, we have to do some cutting and pasting, to obtain a manifold that separates \(\R^3\).  Observe that \(M\cap D\) is a finite union of disjoint simple loops. For each of this loops \(C_j\), for $\epsilon>0$ sufficiently small, there is a family of loops \(C_j^+(\epsilon)\) and \(C_j^-(\epsilon)\) in \(M\cap\{z=\epsilon\}\) and \(M\cap\{z=-\epsilon\}\) respectively, which converge to \(C_j\). For each \(C_j\) we pick a different value of $\epsilon$, we attach two planar caps to \(C_j^+(\epsilon)\) and \(C_j^-(\epsilon)\) and we neglect the annular region of \(M\) between the two loops. Moreover, we attach the planar cap \(D\) to \(M\) along \(C\). There is a choice of the $\epsilon$'s such that the resulting surface is an embedded, not necessarily connected, surface with corners. We consider only the component that contains \(C\), and we call it \(\tilde{M}\). This is a properly embedded connected surface (with corners), and so it separates \(\R^3\) in two connected components. By our assumptions, we have that $\tilde{M}$ must intersect $\mathcal{H}$ also outside of \(\bar{D}\). In particular, we have that \(\bar{M}_+\cap(\mathcal{H}\setminus\bar{D})\subset\tilde{M}\cap(\mathcal{H}\setminus\bar{D})\). We show that $\tilde{M}\cap(\mathcal{H}\setminus\bar{D})$ contains a unique non-contractible loop $\gamma$, thereby implying -- since \(M\), in a neighborhood of \(C\), is contained in \(\R^3_+\) -- that\,\,\(\bar{M}_+\cap(\mathcal{H}\setminus\bar{D})=\tilde{M}\cap(\mathcal{H}\setminus\bar{D})\), and concluding the proof of Step 3.

Now, $\tilde{M}\cap(\mathcal{H}\setminus\bar{D})$ is the union of a finite number of disjoint loops. To show the existence of $\gamma$, we show that the first loop that a vertical plane meets, coming from infinity toward \(C\), cannot be trivial in \(\mathcal{H}\setminus\bar{D}\). Indeed, suppose by contradiction that this is not the case, and let $\beta$ be the trivial loop maximizing the distance from \(C\). Let \(Q(t)\) be a family of vertical parallel planes, with normal $\nu$, coming from infinity toward \(C\), first touching $\beta$ on the farthest point from \(C\).
	  	  	  \begin{figure}[!htb]

	\tikzset{every picture/.style={line width=0.65pt}} 
	\centering
	\begin{tikzpicture}[x=0.75pt,y=0.75pt,yscale=-0.7,xscale=0.7]
		
		\draw    (226,77.2) -- (178,119.6) ;
		\draw    (226,161.2) -- (178,203.6) ;
		\draw    (226,245.2) -- (178,287.6) ;
		\draw    (178,119.6) -- (454,119.6) ;
		\draw    (178,203.6) -- (454,203.6) ;
		\draw    (178,287.6) -- (454,287.6) ;
		\draw    (502,161.2) -- (454,203.6) ;
		\draw    (502,245.2) -- (454,287.6) ;
		\draw    (502,77.2) -- (454,119.6) ;
		\draw   (222,178.8) .. controls (222,173.72) and (240.19,169.6) .. (262.63,169.6) .. controls (285.07,169.6) and (303.26,173.72) .. (303.26,178.8) .. controls (303.26,183.88) and (285.07,188) .. (262.63,188) .. controls (240.19,188) and (222,183.88) .. (222,178.8) -- cycle ;
		\draw   (432.68,163.38) .. controls (441.82,167.03) and (418.45,167.43) .. (412,170.6) .. controls (405.55,173.77) and (418.38,183.81) .. (407,185.6) .. controls (395.62,187.39) and (348.12,182.57) .. (368.12,168.86) .. controls (388.11,155.15) and (423.53,159.72) .. (432.68,163.38) -- cycle ;
		\draw   (252,94.3) .. controls (252,91.15) and (262.07,88.6) .. (274.5,88.6) .. controls (286.93,88.6) and (297,91.15) .. (297,94.3) .. controls (297,97.45) and (286.93,100) .. (274.5,100) .. controls (262.07,100) and (252,97.45) .. (252,94.3) -- cycle ;
		\draw   (308,79.3) .. controls (308,76.15) and (318.07,73.6) .. (330.5,73.6) .. controls (342.93,73.6) and (353,76.15) .. (353,79.3) .. controls (353,82.45) and (342.93,85) .. (330.5,85) .. controls (318.07,85) and (308,82.45) .. (308,79.3) -- cycle ;
		\draw   (332,99.8) .. controls (332,97.48) and (339.42,95.6) .. (348.58,95.6) .. controls (357.74,95.6) and (365.16,97.48) .. (365.16,99.8) .. controls (365.16,102.12) and (357.74,104) .. (348.58,104) .. controls (339.42,104) and (332,102.12) .. (332,99.8) -- cycle ;
		\draw   (245,263.3) .. controls (245,260.15) and (255.07,257.6) .. (267.5,257.6) .. controls (279.93,257.6) and (290,260.15) .. (290,263.3) .. controls (290,266.45) and (279.93,269) .. (267.5,269) .. controls (255.07,269) and (245,266.45) .. (245,263.3) -- cycle ;
		\draw   (325,272.3) .. controls (325,269.15) and (335.07,266.6) .. (347.5,266.6) .. controls (359.93,266.6) and (370,269.15) .. (370,272.3) .. controls (370,275.45) and (359.93,278) .. (347.5,278) .. controls (335.07,278) and (325,275.45) .. (325,272.3) -- cycle ;
		\draw   (324,250.8) .. controls (324,248.48) and (331.42,246.6) .. (340.58,246.6) .. controls (349.74,246.6) and (357.16,248.48) .. (357.16,250.8) .. controls (357.16,253.12) and (349.74,255) .. (340.58,255) .. controls (331.42,255) and (324,253.12) .. (324,250.8) -- cycle ;
		\draw   (381,261.8) .. controls (381,259.48) and (388.42,257.6) .. (397.58,257.6) .. controls (406.74,257.6) and (414.16,259.48) .. (414.16,261.8) .. controls (414.16,264.12) and (406.74,266) .. (397.58,266) .. controls (388.42,266) and (381,264.12) .. (381,261.8) -- cycle ;
		\draw [color={rgb, 255:red, 208; green, 2; blue, 27 }  ,draw opacity=1 ]   (440,64.6) -- (440,320.6) ;
		\draw [color={rgb, 255:red, 208; green, 2; blue, 27 }  ,draw opacity=1 ]   (488,278.2) -- (440,320.6) ;
		\draw [color={rgb, 255:red, 208; green, 2; blue, 27 }  ,draw opacity=1 ]   (488,22.2) -- (440,64.6) ;
		\draw [color={rgb, 255:red, 208; green, 2; blue, 27 }  ,draw opacity=1 ] [dash pattern={on 4.5pt off 4.5pt}]  (488,77.2) -- (440,119.6) ;
		\draw [color={rgb, 255:red, 208; green, 2; blue, 27 }  ,draw opacity=1 ] [dash pattern={on 4.5pt off 4.5pt}]  (488,161.2) -- (440,203.6) ;
		\draw [color={rgb, 255:red, 208; green, 2; blue, 27 }  ,draw opacity=1 ] [dash pattern={on 4.5pt off 4.5pt}]  (488,245.2) -- (440,287.6) ;
		\draw [color={rgb, 255:red, 208; green, 2; blue, 27 }  ,draw opacity=1 ]   (452,76.4) -- (417,76.4) ;
		\draw [shift={(415,76.4)}, rotate = 360] [color={rgb, 255:red, 208; green, 2; blue, 27 }  ,draw opacity=1 ][line width=0.75]    (10.93,-3.29) .. controls (6.95,-1.4) and (3.31,-0.3) .. (0,0) .. controls (3.31,0.3) and (6.95,1.4) .. (10.93,3.29)   ;
		
		\draw (282,153.4) node [anchor=north west][inner sep=0.75pt]  [xscale=1,yscale=1]  {$C$};
		\draw (421,139.4) node [anchor=north west][inner sep=0.75pt]  [xscale=1,yscale=1]  {$\beta $};
		\draw (152,200) node [anchor=north west][inner sep=0.75pt]  [xscale=1,yscale=1]  {$\mathcal{H}$};
		\draw (152,118.4) node [anchor=north west][inner sep=0.75pt]  [xscale=1,yscale=1]  {$\mathcal{H}^+_{n}$};
		\draw (152,285.4) node [anchor=north west][inner sep=0.75pt]  [xscale=1,yscale=1]  {$\mathcal{H}^-_{n}$};
		\draw (400,304.4) node [anchor=north west][inner sep=0.75pt]  [xscale=1,yscale=1]  {$Q( t)$};
		\draw (424,55.4) node [anchor=north west][inner sep=0.75pt]  [xscale=1,yscale=1]  {$\nu $};

	\end{tikzpicture}
	\caption{The Alexandrov procedure.}
	\label{figureAlexandrov}
\end{figure}
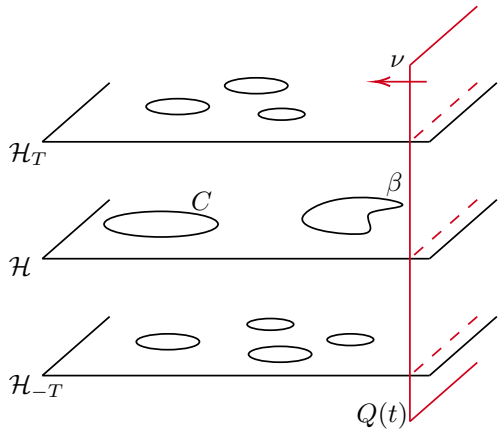

By the proof of Lemma \ref{lemmacircles}, we know there exists a positive diverging sequence \(\{t_n\}_{n\in\N}\) and some values \(\{T_i\}_{i\in I\cup J}\) such that, for \(i\in I\), the intersection of the \(i\)-th end with the plane $\{z=t_n+T_i\}$ is a simple closed loop converging smoothly to a circle. Moreover, given a vertical plane \(Q\), we have the convergence of the corresponding Alexandrov function as in \eqref{equationconvergencealpha}. The analogous statement holds for the \(j\)-th end, for \(j\in J\), with the plane \(\{z=-t_n+T_j\}\).

The strategy would be now to cut every positive \(i\)-th end at height \(t_n+T_i\) (and every negative \(j\)-th end ad height \(-t_n+T_j\)), and forget the part of the end which sits above such a height (or below, for negative ends); then, use Alexandrov method on the resulting compact surface. In the following, purely for notational convenience, we will assume that all the \(T_i\)'s are zero, for \(i\in I\cup J\), and denote the plane\,\,\(\{z=\pm t_n\}\) by $\mathcal{H}^\pm_n$. This assumption entails no loss of generality and does not affect the proof, since the argument in the general case is identical (up to a straightforward translation).

We initially assume that no end of \(\tilde{M}\) has a plane of symmetry in the direction of\,$\nu$. We pick an initial plane disjoint from \(\tilde{M}\) -- this is possible by cylindrical boundedness -- and we start moving it toward \(\tilde{M}\). We only focus on what happens on the loops in the planes \(\mathcal{H}^\pm_{n}\) and $\beta$. We stop when the Alexandrov procedure stops for either $\beta$ or the loops in \(\mathcal{H}^\pm_{n}\) (Figure \ref{figureAlexandrov}). This shall happen before the planes have reached \(C\). In any case, Lemma\,\,\ref{lemmaalphafunction1} implies that, in the slab\,\,\(\{|z|\leq t_n\}\), an interior touching point for \(\tilde{M}\) must have occurred. Indeed, apply Lemma \ref{lemmaalphafunction1} separately on each annular end, for \(|z|\geq t_n\). Then, outside of this horizontal slab, the monotonicity of the Alexandrov functions relative to each end implies that the reflection of \(\tilde{M}\) through the planes is still contained in the interior of $\tilde{M}$. But this is absurd: we would have found a plane of symmetry for \(M\), with \(C\) all in one side; \(\tilde{M}\cap\{|z|\leq t_n\}\) being a graph in the other halfspace.

\begin{figure}[!htb]

	\centering
	
	\tikzset{every picture/.style={line width=0.6pt}} 
	
	\begin{tikzpicture}[x=0.75pt,y=0.75pt,yscale=-0.65,xscale=0.65]
		
		\draw   (107,227.3) .. controls (107,219.18) and (134.31,212.6) .. (168,212.6) .. controls (201.69,212.6) and (229,219.18) .. (229,227.3) .. controls (229,235.42) and (201.69,242) .. (168,242) .. controls (134.31,242) and (107,235.42) .. (107,227.3) -- cycle ;
		\draw    (160,6.6) .. controls (160.36,16.66) and (176.11,29.28) .. (176,41.6) .. controls (175.89,53.92) and (162.44,59.59) .. (161,74.6) .. controls (159.56,89.61) and (177,93.6) .. (177,108.6) .. controls (177,123.6) and (165.7,129.01) .. (166,139.6) .. controls (166.3,150.19) and (171,170.6) .. (187,170.6) .. controls (203,170.6) and (225.97,159.43) .. (242,144.6) .. controls (258.03,129.77) and (241,120.6) .. (240,108.6) .. controls (239,96.6) and (251,85.6) .. (252,74.6) .. controls (253,63.6) and (241,57.6) .. (241,41.6) .. controls (241,25.6) and (251,15.82) .. (251,8.6) ;
		\draw  [dash pattern={on 4.5pt off 4.5pt}]  (262,4.6) -- (262,271.6) ;
		\draw    (35,294.6) .. controls (35.07,291.83) and (20.21,282.17) .. (21,269.6) .. controls (21.79,257.03) and (40,255.6) .. (41,244.6) .. controls (42,233.6) and (25.98,226.68) .. (29,215.6) .. controls (32.02,204.52) and (37,191.6) .. (62,184.6) .. controls (87,177.6) and (138,174.6) .. (139,147.6) .. controls (140,120.6) and (118,121.6) .. (118,106.6) .. controls (118,91.6) and (135,84.6) .. (136,69.6) .. controls (137,54.6) and (123.27,55.06) .. (124,42.6) .. controls (124.73,30.14) and (138.01,18.02) .. (142,8) ;
		\draw    (59,292.6) .. controls (58.6,287.99) and (72,281.6) .. (74,270.6) .. controls (76,259.6) and (55.2,253.61) .. (59,244.6) .. controls (62.8,235.59) and (78.62,234.69) .. (83,225.6) .. controls (87.38,216.51) and (81.22,199) .. (88,195.6) .. controls (94.78,192.2) and (108.96,215.92) .. (107,227.3) ;
		\draw  [color={rgb, 255:red, 208; green, 2; blue, 27 }  ,draw opacity=1 ] (416.74,227.01) .. controls (416.74,218.91) and (389.48,212.34) .. (355.84,212.34) .. controls (322.21,212.34) and (294.95,218.91) .. (294.95,227.01) .. controls (294.95,235.12) and (322.21,241.69) .. (355.84,241.69) .. controls (389.48,241.69) and (416.74,235.12) .. (416.74,227.01) -- cycle ;
		\draw [color={rgb, 255:red, 208; green, 2; blue, 27 }  ,draw opacity=1 ]   (363.83,6.68) .. controls (363.47,16.72) and (347.74,29.32) .. (347.86,41.62) .. controls (347.97,53.92) and (361.4,59.58) .. (362.83,74.57) .. controls (364.27,89.55) and (346.86,93.53) .. (346.86,108.51) .. controls (346.86,123.48) and (358.14,128.89) .. (357.84,139.46) .. controls (357.54,150.03) and (352.85,170.41) .. (336.88,170.41) .. controls (320.9,170.41) and (297.97,159.25) .. (281.97,144.45) .. controls (265.97,129.64) and (282.97,120.49) .. (283.96,108.51) .. controls (284.96,96.53) and (272.98,85.55) .. (271.98,74.57) .. controls (270.99,63.58) and (282.97,57.59) .. (282.97,41.62) .. controls (282.97,25.65) and (272.98,15.88) .. (272.98,8.68) ;
		\draw [color={rgb, 255:red, 208; green, 2; blue, 27 }  ,draw opacity=1 ]   (488.62,294.2) .. controls (488.56,291.43) and (503.38,281.79) .. (502.6,269.24) .. controls (501.81,256.69) and (483.63,255.26) .. (482.63,244.28) .. controls (481.63,233.3) and (497.63,226.39) .. (494.61,215.33) .. controls (491.6,204.27) and (486.63,191.37) .. (461.67,184.38) .. controls (436.71,177.39) and (385.79,174.4) .. (384.8,147.44) .. controls (383.8,120.49) and (405.76,121.49) .. (405.76,106.51) .. controls (405.76,91.54) and (388.79,84.55) .. (387.79,69.57) .. controls (386.79,54.6) and (400.5,55.06) .. (399.77,42.62) .. controls (399.04,30.18) and (385.79,18.08) .. (381.8,8.08) ;
		\draw [color={rgb, 255:red, 208; green, 2; blue, 27 }  ,draw opacity=1 ]   (464.66,292.2) .. controls (465.06,287.6) and (451.68,281.22) .. (449.69,270.24) .. controls (447.69,259.26) and (468.45,253.28) .. (464.66,244.28) .. controls (460.87,235.29) and (445.07,234.39) .. (440.7,225.31) .. controls (436.33,216.24) and (442.48,198.75) .. (435.71,195.36) .. controls (428.94,191.97) and (414.79,215.65) .. (416.74,227.01) ;
		\draw   (229.08,228) .. controls (229.03,226.42) and (229,224.81) .. (229,223.2) .. controls (229,186.36) and (243.76,156.5) .. (261.97,156.5) .. controls (262.29,156.5) and (262.6,156.51) .. (262.92,156.53) ;  
		\draw  [color={rgb, 255:red, 208; green, 2; blue, 27 }  ,draw opacity=1 ] (262.01,156.5) .. controls (280.2,156.54) and (294.95,186.39) .. (294.95,223.2) .. controls (294.95,224.27) and (294.93,225.34) .. (294.91,226.4) ;  
		\draw [color={rgb, 255:red, 126; green, 211; blue, 33 }  ,draw opacity=1 ]   (294.95,227.01) .. controls (294,274.4) and (340,262.4) .. (343,285.4) ;
		\draw [color={rgb, 255:red, 126; green, 211; blue, 33 }  ,draw opacity=1 ]   (416.74,227.01) .. controls (416.36,245.93) and (404.63,242.73) .. (394,249.6) .. controls (383.37,256.47) and (391.05,261.66) .. (385,267.6) .. controls (378.95,273.54) and (364.07,277.49) .. (365,284.6) ;
		\draw  [dash pattern={on 4.5pt off 4.5pt}] (118,105.61) .. controls (118,101.63) and (130.98,98.41) .. (147,98.41) .. controls (163.02,98.41) and (176,101.63) .. (176,105.61) .. controls (176,109.58) and (163.02,112.8) .. (147,112.8) .. controls (130.98,112.8) and (118,109.58) .. (118,105.61) -- cycle ;
		\draw  [color={rgb, 255:red, 208; green, 2; blue, 27 }  ,draw opacity=1 ][dash pattern={on 4.5pt off 4.5pt}] (347.76,106.51) .. controls (347.76,102.54) and (360.74,99.32) .. (376.76,99.32) .. controls (392.78,99.32) and (405.76,102.54) .. (405.76,106.51) .. controls (405.76,110.49) and (392.78,113.71) .. (376.76,113.71) .. controls (360.74,113.71) and (347.76,110.49) .. (347.76,106.51) -- cycle ;
		\draw  [color={rgb, 255:red, 208; green, 2; blue, 27 }  ,draw opacity=1 ][dash pattern={on 4.5pt off 4.5pt}] (262.06,99.2) .. controls (262.21,99.2) and (262.35,99.2) .. (262.5,99.2) .. controls (274.37,99.2) and (284,102.24) .. (284,106) .. controls (284,109.71) and (274.63,112.72) .. (262.97,112.8) ;  
		\draw  [dash pattern={on 4.5pt off 4.5pt}] (261.95,112.8) .. controls (250.33,112.71) and (241,109.7) .. (241,106) .. controls (241,102.29) and (250.39,99.27) .. (262.06,99.2) ;  
		\draw  [dash pattern={on 4.5pt off 4.5pt}] (21,270.7) .. controls (21,267.33) and (32.86,264.6) .. (47.5,264.6) .. controls (62.14,264.6) and (74,267.33) .. (74,270.7) .. controls (74,274.06) and (62.14,276.79) .. (47.5,276.79) .. controls (32.86,276.79) and (21,274.06) .. (21,270.7) -- cycle ;
		\draw  [color={rgb, 255:red, 208; green, 2; blue, 27 }  ,draw opacity=1 ][dash pattern={on 4.5pt off 4.5pt}] (449.69,270.6) .. controls (449.69,267.23) and (461.55,264.5) .. (476.19,264.5) .. controls (490.82,264.5) and (502.69,267.23) .. (502.69,270.6) .. controls (502.69,273.97) and (490.82,276.7) .. (476.19,276.7) .. controls (461.55,276.7) and (449.69,273.97) .. (449.69,270.6) -- cycle ;
		\draw  [color={rgb, 255:red, 126; green, 211; blue, 33 }  ,draw opacity=1 ][dash pattern={on 4.5pt off 4.5pt}] (323,265.8) .. controls (323,261.82) and (337.1,258.6) .. (354.5,258.6) .. controls (371.9,258.6) and (386,261.82) .. (386,265.8) .. controls (386,269.78) and (371.9,273) .. (354.5,273) .. controls (337.1,273) and (323,269.78) .. (323,265.8) -- cycle ;
		
		\draw (266,259.4) node [anchor=north west][inner sep=0.75pt]  [xscale=1,yscale=1]  {$Q$};
		\draw (170,216) node [anchor=north west][inner sep=0.75pt]  [xscale=1,yscale=1]  {$C$};
		\draw (357.84,215.74) node [anchor=north west][inner sep=0.75pt]  [xscale=1,yscale=1]  {$C^{*}$};

	\end{tikzpicture}
	\caption{An end with a plane of symmetry, with \(C\) all in one side.}
	\label{figure2}
\end{figure}
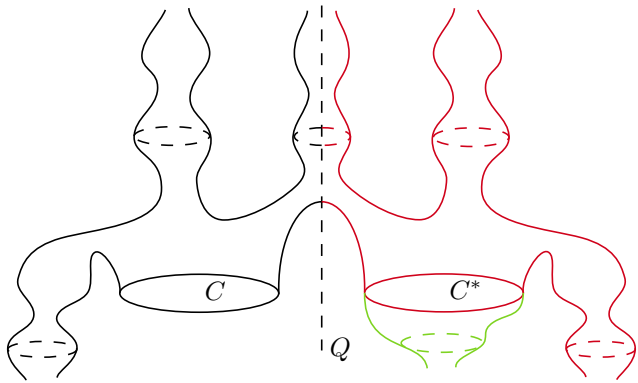

What happens when there is an end with a plane of symmetry in the direction\,\,$\nu$? Clearly, we cannot use the monotonicity of the Alexandrov function, on that end. We now see that an interior touching point must have occurred strictly before the Alexandrov procedure stops for the loop relative to that end. First observe that, when the Alexandrov procedure stops for a touching point for a loop in $\mathcal{H}^\pm_{n}$, we can assume -- up to the choice of a bigger\,\(n\) -- that it has stopped arbitrarily close to the axis of the end relative to the loop. This comes from Lemma \ref{lemmacircles} and Theorem\,\,\ref{geometricconvergence}.

Now, assume \(\tilde{M}\) has at least one end with symmetry in the direction $\nu$, and consider the one whose axis is the first one to be touched by the planes \(Q(t)\) -- this end may be not unique, but still everything we do holds. We call it \(A\). Let $\sigma$ be the loop relative to \(A\). Now, if the axis of \(A\) passes through \(D\), then, by what we have just said, up to the choice of a bigger \(n\) we have that the Alexandrov procedure stops for $\beta$, or for a loop relative to an end without symmetry, strictly before it can stop for $\sigma$.

The only case left is when the axis of \(A\) does not pass through \(D\). In this case, the symmetry of \(M\) implies the existence of another trivial loop -- i.e. \(C^*\), the reflection of \(C\) through the plane of symmetry. Indeed, it follows from the symmetry of \(A\) that the component of \(M\cap\R^3_+\) containing \(C\) is also symmetric, with respect to the same plane (check Figure \ref{figure2}). Now, it may happen that \(\beta\equiv C^*\), or $\beta$ is a non-contractible or trivial loop in $\mathcal{H}\setminus \overline{D^*}$.

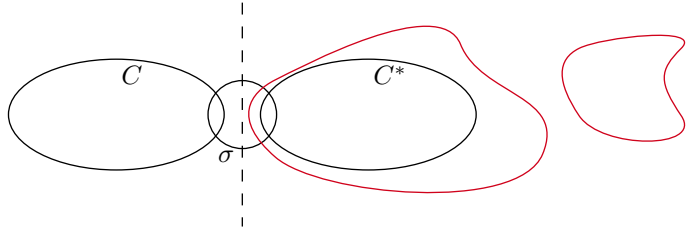
\begin{figure}[!htb]
	\centering

	\tikzset{every picture/.style={line width=0.5pt}} 
	
	\begin{tikzpicture}[x=0.75pt,y=0.75pt,yscale=-0.65,xscale=0.65]
		
		\draw   (95,121.96) .. controls (95,98.28) and (132.06,79.09) .. (177.79,79.09) .. controls (223.51,79.09) and (260.57,98.28) .. (260.57,121.96) .. controls (260.57,145.63) and (223.51,164.82) .. (177.79,164.82) .. controls (132.06,164.82) and (95,145.63) .. (95,121.96) -- cycle ;
		\draw   (248.19,121.96) .. controls (248.19,107.43) and (259.97,95.65) .. (274.5,95.65) .. controls (289.03,95.65) and (300.81,107.43) .. (300.81,121.96) .. controls (300.81,136.48) and (289.03,148.26) .. (274.5,148.26) .. controls (259.97,148.26) and (248.19,136.48) .. (248.19,121.96) -- cycle ;
		\draw   (288.43,121.96) .. controls (288.43,98.28) and (325.49,79.09) .. (371.21,79.09) .. controls (416.94,79.09) and (454,98.28) .. (454,121.96) .. controls (454,145.63) and (416.94,164.82) .. (371.21,164.82) .. controls (325.49,164.82) and (288.43,145.63) .. (288.43,121.96) -- cycle ;
		\draw  [dash pattern={on 4.5pt off 4.5pt}]  (274.5,35.3) -- (274.5,208.61) ;
		\draw  [color={rgb, 255:red, 208; green, 2; blue, 27 }  ,draw opacity=1 ] (300,95) .. controls (320,85) and (426,27.6) .. (442,67.6) .. controls (458,107.6) and (524,106.6) .. (505,152.6) .. controls (486,198.6) and (334,184.6) .. (300,155) .. controls (266,125.4) and (280,105) .. (300,95) -- cycle ;
		\draw  [color={rgb, 255:red, 208; green, 2; blue, 27 }  ,draw opacity=1 ] (533.06,70.66) .. controls (550.48,61.95) and (628.86,53.24) .. (611.44,70.66) .. controls (594.02,88.07) and (594.02,96.78) .. (611.44,122.91) .. controls (628.86,149.03) and (550.48,149.03) .. (533.06,122.91) .. controls (515.65,96.78) and (515.65,79.36) .. (533.06,70.66) -- cycle ;
		
		\draw (179.79,82.49) node [anchor=north west][inner sep=0.75pt]  [xscale=1,yscale=1]  {$C$};
		\draw (373.21,82.49) node [anchor=north west][inner sep=0.75pt]  [xscale=1,yscale=1]  {$C^{*}$};
		\draw (254,148) node [anchor=north west][inner sep=0.75pt]  [xscale=1,yscale=1]  {$\sigma $};

	\end{tikzpicture}
	\caption{How \(C^*\) and $\beta$ may be arranged.}
	\label{figurearrangement}
\end{figure}

In any case, the existence of \(C^*\) acts as a ``barrier'' for the Alexandrov procedure with direction $\nu$ (check Figure \ref{figurearrangement}). Indeed, the Alexandrov procedure must have stopped -- for a touching point of $\beta$ with itself or for a touching point of $\beta$ with\,\,\(C^*\), when \(\beta\not\equiv C^*\) -- before the plane has swept the whole \(C^*\) and, a fortiori, before a touching point for $\sigma$ -- again, up to the choice of a bigger \(n\). So there exists a non-contractible loop $\gamma$ in $\mathcal{H}\setminus\bar{D}$, such that \(\tilde{M}\) does not touch $\mathcal{H}$ in the exterior of\,\,$\gamma$.

The fact that $\gamma$ is the unique non-contractible loop in $\mathcal{H}\setminus\bar{D}$ is proved analogously to what we did with the non-existence of $\beta$. Namely, we assume by contradiction that there is another non-contractible loop \(\gamma_1\), in the interior of $\gamma$, and use the Alexandrov reflection principle, morally looking for touching points of the reflections of $\gamma$ with $\gamma_1$. Thus, $\gamma$ is unique.

As anticipated, the existence and uniqueness of $\gamma$ implies that the mean curvature vector along \(C\) points toward the exterior of \(C\). This concludes Step 3.

Now, to conclude the proof of Theorem \ref{two} in the case of positive balance, we apply flux formula to \(M\), as in the computation of \eqref{equationflux3}. Nevertheless, in this case, we know that the normal vector to\,\,\(D\) is \(-e_3\), and so:
					\begin{equation}\label{equation120925}
						0<\frac{1}{2}\int_{C}\pe{e_3,(2a+bT)\nu_0}=-|D|+\sum_{i\in I}\mathcal{W}_i-\sum_{j\in J}\mathcal{W}_j	,
					\end{equation}
which concludes the proof in the case of positive balance.

\textbf{The case of negative balance:} The proof in this case essentially follows from what was done above. We first claim that \(\vec{H}\), along \(C\), points toward the interior of \(C\). Indeed, if this was not the case, then, by flux formula, we would get \eqref{equation120925}, which would imply\,\,\(|D|<0\).

Our next claim is that \(M_+\), as defined above, doesn't intersect \(\mathcal{H}\setminus\bar{D}\). Indeed, if \(\mathcal{H}\setminus\bar{D}\neq\emptyset\), then we could apply Step 3 above, and get a contradiction by the preceding claim.

Now, \(M\) cannot be contained in the upper halfspace, since there must be at least one negative end. Thus, it holds \(M_+\cap D\neq\emptyset\) and \(M_+\cap(\mathcal{H}\setminus\bar{D})=\emptyset\). We can then apply Step 1 above, and get, in particular, that \(\mathcal{T}_{\text{ext}}\neq\emptyset\), which implies the existence of the loop $\gamma$ in the thesis. The bound on \(|D|\) now follows from flux formula, taking into consideration the direction of $\vec{H}$ along \(C\). This concludes the proof.
\end{proof}

As in \cite{saearpconvex}, we have the following corollary.
\begin{corollary}\label{corollary}
	Assume \(M\) satisfies the hypothesis of Theorem \ref{two} with: \[\sum_{i\in I}\mathcal{W}_i-\sum_{j\in J}\mathcal{W}_j\geq0,\] it is non-compact and $\partial M$ is a circle of radius \(r\). Then, either \(M\) is an annular end of a W-Delaunay surface or \(M\) has at least\,\,$r^2(2a^2+b)^{-1}$ positive ends.
\end{corollary}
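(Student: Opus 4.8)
The plan is to feed $M$ into the dichotomy of Theorem \ref{two} under the standing hypothesis $\sum_{i\in I}\mathcal{W}_i-\sum_{j\in J}\mathcal{W}_j\geq0$: either $M\subset\R^3_+$, or there is a simple loop $\gamma\subset M\cap(\mathcal{H}\setminus\bar{D})$ generating $\Pi_1(\mathcal{H}\setminus\bar{D})$ and one has $|D|\leq\sum_{i\in I}\mathcal{W}_i-\sum_{j\in J}\mathcal{W}_j$. I would show that the first alternative forces $M$ to be an annular end of a W-Delaunay, while the second yields the claimed lower bound on the number of positive ends.

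Suppose first $M\subset\R^3_+$. On a linear Weingarten surface $H$ never vanishes — a zero of $H$ would force $bK=1$, contradicting $K\leq0$ — so, $M$ being connected, $H$ has constant sign on $M$. Since the linear Weingarten class satisfies the hypotheses of Theorem \ref{saearp1}, each of the finitely many vertical annular ends of $M$ is contained in a solid vertical cylinder; together with a compact core this puts $M$ inside a single solid vertical cylinder. As $\partial M=C$ is a circle, hence strictly convex, Theorem \ref{zero} applies and $M$ inherits all the symmetries of $C$, i.e. $M$ is rotational about the vertical line through the center of $C$. A complete, properly embedded, non-compact, rotational linear Weingarten surface whose boundary is a single circle has exactly one end: its profile curve is a proper arc with one endpoint on the boundary circle and not meeting the axis (otherwise $M$ would close up and be compact). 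By the description of rotational linear Weingarten surfaces in \cite{saearpweingarten}, $M$ is then an annular end of a W-Delaunay.

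Now suppose $M\not\subset\R^3_+$, so the loop $\gamma$ exists and $|D|\leq\sum_{i\in I}\mathcal{W}_i-\sum_{j\in J}\mathcal{W}_j$. Since each $\mathcal{W}_j=\pi(R_jr_j+b)>0$, we may discard the negative ends to obtain $\pi r^2=|D|\leq\sum_{i\in I}\mathcal{W}_i$. For a positive end, $\mathcal{W}_i=\pi(R_ir_i+b)$ with $R_i\geq r_i\geq0$ the big and small radii of the limiting W-Delaunay, and relation \eqref{relation} from Lemma \ref{lemmamassadelaunay} gives $R_i+r_i=2a$, hence $R_i\leq 2a$ and $r_i\leq a$, so $R_ir_i\leq 2a^2$ and $\mathcal{W}_i\leq\pi(2a^2+b)$. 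Summing over $i\in I$ yields $\pi r^2\leq(\card I)\,\pi(2a^2+b)$, that is, $M$ has at least $r^2(2a^2+b)^{-1}$ positive ends.

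The numerology in the last step is routine; the delicate point is the case $M\subset\R^3_+$, where one must verify all the hypotheses of Theorem \ref{zero} — in particular that $M$ is contained in a solid vertical cylinder, which rests on Theorem \ref{saearp1} and the verticality of the ends — and then upgrade ``$M$ is rotational'' to ``$M$ is an annular end of a W-Delaunay'' via the classification of rotational linear Weingarten surfaces.
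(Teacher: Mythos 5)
Your proposal is correct and follows essentially the same route as the paper, which simply invokes Theorem \ref{zero} and Theorem \ref{two} together with the relation \eqref{relation} to bound $R_ir_i$ by $2a^2$ (your estimate $R_i\leq 2a$, $r_i\leq a$ gives the same bound the paper uses; AM--GM would even give $R_ir_i\leq a^2$). The only difference is that you spell out the details the paper leaves implicit — verifying the hypotheses of Theorem \ref{zero} in the case $M\subset\R^3_+$ and upgrading rotationality to being an annular end of a W-Delaunay — and these details are filled in correctly.
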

\begin{proof}
	It is a straightforward application of Theorem \ref{zero} and Theorem \ref{two}, using the fact that, by \eqref{relation}, we have that \(R_jr_j<2a^2\) for each \(j\). 
\end{proof}
As already mentioned, we remark that the analogue of Theorem \ref{two} holds in the cmc case, thereby extending the result of Rosenberg and Sa Earp in \cite{saearpconvex}. In this setting, the mass of an annular end \(E\), converging to a Delaunay surface \(D\), is defined as \(\mathcal{W}\defeq\pi\left(\frac{r}{H}-r^2\right)\), where \(r\) is the small radius of \(D\) and \(H\) is the value of the constant mean curvature.
\begin{theorem}\label{twocmc}
	Let \(M\) be a complete, properly embedded surface, having non-zero constant mean curvature \(H\), with boundary a strictly convex planar curve\,\,\(C\subset\mathcal{H}\). Let \(D\) be the interior of \(C\). Assume that \(M\) is transverse to \(\mathcal{H}\) along\,\,\(C\) and \(M\) is contained, say, in the upper halfspace near \(C\). Assume that \(M\) has a finite number \(n\geq0\) of vertical annular ends. We index with \(i\in I\) the positive ends and with \(j\in J\) the negative ones. 
	
	If \(M\) is compact, then \(M\) is contained in the upper halfspace.
	
	If \(M\) is not compact, then either \(M\) is contained in the upper halfspace or there is a simple loop $\gamma\subset M\cap(\mathcal{H}\setminus\bar{D})$, such that $\gamma$ generates \(\Pi_1(\mathcal{H}\setminus\bar{D})\). In the latter case, it holds:
	\begin{itemize}
		\item \(|D|\leq\sum_{i\in I}\mathcal{W}_i-\sum_{j\in J}\mathcal{W}_j\) when \(\sum_{i\in I}\mathcal{W}_i-\sum_{j\in J}\mathcal{W}_j\geq0\).
		\item \(|D|\geq\sum_{j\in J}\mathcal{W}_j-\sum_{i\in I}\mathcal{W}_i\) when \(\sum_{i\in I}\mathcal{W}_i-\sum_{j\in J}\mathcal{W}_j<0\);
	\end{itemize}
	where, for each \(i\in I\cup J\), $\mathcal{W}_i$ is the mass of the \(i\)-th end.
\end{theorem}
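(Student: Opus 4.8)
The plan is to re-run the proof of Theorem \ref{two} essentially verbatim, after checking that every place where the linear Weingarten equation $2aH+bK=1$ was used admits a constant mean curvature substitute obtained by formally setting $b=0$ and $2a=1/H$. There are only five ingredients to replace. Cylindrical boundedness and geometric convergence of annular ends to Delaunay surfaces are the classical theorems of Meeks and of Korevaar--Kusner--Solomon \cite{Korevaar1989TheSO}, the CMC predecessors of Theorem \ref{saearp1} and Theorem \ref{geometricconvergence}. The monotonicity of the Alexandrov function, Lemma \ref{lemmaalphafunction1}, already covers CMC surfaces, since they satisfy an elliptic equation for which the interior and boundary maximum principles hold. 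The flux (balancing) formula for a CMC-$H$ surface is $2H\int_\Omega\pe{Y,n}=\int_{\partial\Omega}\pe{Y,\nu}$, which is exactly the $b=0$, $2a=1/H$ specialization of \eqref{equationflux}. And Lemma \ref{lemmacountingloops} is purely topological, hence used unchanged; Lemma \ref{lemmacircles} transfers with an identical proof once geometric convergence is available.

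The one genuinely new computation is the CMC analogue of Lemma \ref{lemmamassadelaunay}. I would orient the surface by its mean curvature vector, so that $H>0$. On a vertical CMC Delaunay surface the quantity $\int_K\pe{e_3,n_K}-\tfrac{1}{2H}\int_\Gamma\pe{e_3,\nu_\Gamma}$ is homotopically invariant by the flux formula, hence may be evaluated on a planar disc bounded by a parallel. Repeating the computation of \eqref{wdel} with $b=0$ and $2a=1/H$ yields the value $-\pi(R^2-R/H)$; equating the values obtained on a bulge and on a neck gives the Delaunay relation $R+r=1/H$ (first for $R\neq r$, then by continuity for the cylinder), and the invariant then equals $\pi Rr=\pi\bigl(\tfrac{r}{H}-r^2\bigr)=\mathcal{W}$, matching the stated definition of the mass. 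Note too that the operator playing the role of $2aI+bT$ in the flux identity is now simply $\tfrac1H I$, whose eigenvalues are positive for $H>0$; this positivity is what was used in Step 2 of Theorem \ref{two} to pass from \eqref{equationflux} to the strict inequality \eqref{equationflux1}.

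With these preliminaries in hand, Steps 1--3 of the proof of Theorem \ref{two} carry over without modification. In Step 1 one caps off the annular ends, forms the separating surface-with-corners $\tilde{M}_1$, and applies Lemma \ref{lemmacountingloops} twice -- once to the components of $\tilde{M}_1\cap\R^3_+$ lying inside $M_+\cup D_0$, using a proper ray in the exterior of $M_+\cup D_0$, and once to those lying outside, using a proper ray in $M_+$ -- to conclude that $M_+$ is compact; the fact that $\vec H$ points outward along every outer loop uses only that $D\cup M$ separates $\R^3$. In Step 2 one writes the flux identity for $M_+\cup D_0$, obtaining \eqref{equationflux1}, and the flux identity for the capped-off $M\cup D$, obtaining the analogue of \eqref{balancing1} in which each end contributes exactly its mass $\mathcal{W}_i$ -- with a positive sign for a positive end and a negative sign for a negative one -- by the homotopy invariance above together with the CMC version of Lemma \ref{lemmamassadelaunay}; comparing the two identities contradicts \eqref{equationbalance}, so $M_+$ must meet $\mathcal{H}\setminus\bar D$. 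In Step 3 one runs the Alexandrov reflection argument: Lemma \ref{lemmaalphafunction1} forces an interior touching point on the ends without a plane of symmetry in the direction $\nu$, while for an end with such a symmetry Lemma \ref{lemmacircles} and geometric convergence show that the procedure stops for $\beta$ (or for a touching point with the reflected boundary $C^*$) strictly before it can stop for that end's loop; this produces a unique non-contractible $\gamma\subset\mathcal{H}\setminus\bar D$ and forces $\vec H$ to point outward along $C$, after which a final application of flux gives the analogue of \eqref{equation120925} with the CMC masses and hence the claimed inequality. The negative-balance case is identical to the one in Theorem \ref{two}: flux first fixes the direction of $\vec H$ along $C$, Step 3 then rules out $M_+\cap(\mathcal{H}\setminus\bar D)\neq\emptyset$, and Step 1 produces $\gamma$ together with the reversed inequality.

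I do not expect an essential obstacle. The only point requiring a line of care is the orientation convention: since $H$ is a nonzero constant one fixes the unit normal so that $H>0$, which is what makes $\mathcal{W}=\pi(r/H-r^2)$ nonnegative and consistent with the orientation used in Lemma \ref{lemmamassadelaunay}. Beyond that, one should check that the classical CMC flux formula applies to the compact cycles with corners built by attaching planar caps -- it does, exactly as in the linear Weingarten case with $b=0$.
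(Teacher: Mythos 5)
Your proposal is correct and follows exactly the route the paper intends: the paper gives no separate argument for Theorem \ref{twocmc}, asserting only that the proof of Theorem \ref{two} carries over with the classical CMC flux formula and the mass $\mathcal{W}=\pi(r/H-r^2)$, which is precisely the $b=0$, $2a=1/H$ specialization you verify (including the correct derivation $R+r=1/H$ and $\pi Rr=\pi(r/H-r^2)$ from the analogue of Lemma \ref{lemmamassadelaunay}). Your itemization of which ingredients need CMC substitutes (Korevaar--Kusner--Solomon for convergence and flux, the purely topological Lemma \ref{lemmacountingloops} unchanged) supplies the details the paper leaves implicit.
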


Now, consider the case of non-negative balance. As a consequence of Theorem\,\,\ref{twocmc} we get that, in presence of ends, the balance \(\sum_{i\in I}\mathcal{W}_i-\sum_{j\in J}\mathcal{W}_j\) is \textit{strictly positive}. This means that, given a surface \(M\) satisfying the hypothesis of Theorem\,\,\ref{twocmc} and such that:
\[\sum_{i\in I}\mathcal{W}_i=\sum_{j\in J}\mathcal{W}_j,\]
it holds that \(M\) must be compact, i.e. \(I\cup J=\emptyset\). This can also be seen as a partial result in the direction of the original question of Rosenberg and Sa Earp for cmc surfaces \cite{saearpconvex}, whose analogue for Weingarten surfaces was mentioned in the beginning: let \(C\) be a circle in the plane $\mathcal{H}$ and let \(M\) be a properly embedded cmc (or linear Weingarten, with \(a,b>0\)) surface, with $\partial M\equiv C$ and \(M\) transverse to $\mathcal{H}$ along \(C\), having a finite number of annular ends, which are all vertical. Must \(M\) be rotational? The previous discussion tells us that, when the total mass of the positive ends is equal to the total mass of the negative ends, then \(M\) is rotational (and it is in fact compact).

\phantom{ciao}

{\bf Acknowledgments}. This study was partially supported by INdAM-GNSAGA and PRIN-2022AP8HZ9. I am very grateful to Barbara Nelli for first proposing the problem and for many stimulating discussions, which were fundamental in the progress of the present work.

\end{document}